\newcommand{\beq}{\begin{equation}}
\newcommand{\eeq}{\end{equation}}
\newcommand{\beqa}{\begin{eqnarray}}
\newcommand{\eeqa}{\end{eqnarray}}
\newcommand{\beaa}{\begin{eqnarray*}}
\newcommand{\ben}{\begin{eqnarray*}}
\newcommand{\eaa}{\end{eqnarray*}}
\newcommand{\een}{\end{eqnarray*}}
\newcommand \nc {\newcommand}
\newtheorem{theorem}{Theorem}[section]
\newtheorem{lemma}[theorem]{Lemma}
\newtheorem{definition}[theorem]{Definition}
\nc \thref{Theorem \ref}
\nc \leref{Lemma \ref}
\nc \prref{Proposition \ref}
\nc \coref{Corollary \ref}
\nc \deref{Definition \ref}
\nc \exref{Example \ref}
\nc \reref{Remark \ref}
\newcommand{\A}{\mathcal{A}}
\newcommand{\B}{\mathcal{B}}
\newcommand{\C}{\mathbb{C}}
\newcommand{\D}{\mathcal{D}}
\newcommand{\F}{\mathcal{F}}
\renewcommand{\H}{\mathcal{H}}
\newcommand{\M}{\mathcal{M}}
\renewcommand{\O}{\mathcal{O}}
\newcommand{\T}{\mathcal{T}}
\newcommand{\Z}{\mathbb{Z}}
\newcommand{\f}{\mathbf{f}}
\newcommand{\q}{\mathbf{q}}
\renewcommand{\t}{\mathbf{t}}
\def\dim{\mathop{\rm dim}\nolimits}
\def\d{\partial}
\def\({\left(}
\def\){\right)}
\def\[{\left[}
\def\]{\right]}
\def\<{\left\langle}
\def\>{\right\rangle}
\def\lieh{{\mathfrak{h}}}
\def\one{{\bf 1}}
\def\D{{\mathcal D}}
\def\la{\lambda}
\def\ge{\epsilon}
\def\al{\alpha}
\def\de{\delta}
\def\be{\beta}
\def\Si{\Sigma}
\newcommand{\leftexp}[2]{{\vphantom{#2}}^{#1}{#2}}
\title[Analyticity of the ancestors in singularity theory]
{Analyticity of  the total ancestor potential in singularity theory}
\author{Todor Milanov}
\address{Kavli IPMU (WPI) \\ The University of Tokyo \\ Kashiwa \\ Chiba 277-8583 \\ Japan}
\email{todor.milanov@ipmu.jp}
\thanks{{\em 2000 Math. Subj. Class.} 14D05, 14N35, 17B69}
\thanks{
{\em Key words and phrases:} period integrals, Frobenius structure,
Goromov--Witten invariants, vertex operators}
\begin{document}

\begin{abstract}
K. Saito's theory of primitive forms gives a natural semi-simple
Frobenius manifold structure on the space of miniversal deformations of an
isolated singularity. On the other hand, Givental introduced the notion of a total
ancestor potential for every semi-simple point of a Frobenius manifold and
conjectured that in the settings of singularity theory his definition
extends analytically to non-semisimple points as well. In this
paper we prove Givental's conjecture by using the Eynard--Orantin
recursion.   
\end{abstract}
\maketitle
\tableofcontents
\addtocontents{toc}{\protect\setcounter{tocdepth}{1}}

\section{Introduction}

The Gromov--Witten invariants of a compact algebraic manifold $V$ are by
definition a virtual count of holomorphic maps from a Riemann surface
to $V$ satisfying various incidents constraints. Although the
rigorous definition of the Gromov--Witten invariants is very
complicated, when it comes to computations, quite a bit of techniques
were developed. One of the most exciting achievements is due to Givental
who conjectured that under some technical conditions, which amount to
saying that $V$ has sufficiently many rational curves, we can
reconstruct the higher genus invariants in terms of genus 0 and the
higher genus Gromov--Witten invariants of the point. Givental's
conjecture was proved recently by Teleman \cite{Te} and its impact on
other areas of mathematics, such as integrable systems and
the theory of quasi-modular forms is a subject of an ongoing
investigation (see \cite{BPS}, \cite{MR}). 

The higher genus reconstruction formalism of Givental (see \cite{G1}
or Section \ref{sec4} bellow)  is most
naturally formulated in the abstract settings of the so called  {\em semi-simple
Frobenius manifolds} (see \cite{Du} for some background on Frobenius
manifolds). In the case of Gromov--Witten theory, the Frobenius
structure is given on the vector space $H^*(V;\C)$ and it is induced
from the quantum cup product. More precisely, Givental defined the
total ancestor potential of a semi-simple Frobenius manifold which in
the case of Gromov--Witten theory coincides with a generating
function of the so called {\em ancestor} Gromov--Witten invariants
(see \cite{G2}). 

In this paper we study the total ancestor potential of the semi-simple
Frobenius manifold arising in singularity theory. 
Let $f\in \O_{\C^{2l+1},0}$ be the germ of a holomorphic function with an
isolated critical point at $0$, i.e., the local algebra
$H:=\O_{\C^{2l+1},0}/(f_{x_0},\dots, f_{x_{2l}})$ is a finite
dimensional vector space (over $\C$). The dimension is called {\em
  multiplicity} of the critical point and it will be denoted by $N.$
We fix a miniversal deformation $F(t,x),$ $t\in B$ and a primitive
form $\omega$ in the sense of K. Saito \cite{Sa, MS}, so that $B$
inherits a Frobenius structure (see \cite{He,SaT}). 
Let $B_{\rm ss}$ be the set of points $t_0\in B$, such that the critical
values $u_1(t),\dots,u_N(t)$ of $F(t,\cdot)$ form a coordinate system
for $t$ in a neighborhood of $t_0.$ In such coordinates the product
and the residue pairing assume a diagonal form which means that the
corresponding Frobenius algebra is semi-simple. 
Let $\t=\{t_{k,i}\}_{k=0,1,\dots}^{i=1,\dots,N}$ be a sequence of
formal variables. 
For every $t\in B_{\rm ss}$ we denote by $\A_t(\hbar;\t)$ the total ancestor potential
of the Frobenius structure (c.f. Section \ref{asop}). It is a formal
power series  in $\t$ with coefficients formal Laurent series in
$\hbar$, whose coefficients are analytic functions in $t\in B_{ss}$. 
\begin{theorem}\label{t1}
The total ancestor potential of an isolated singularity corresponding
to the Frobenius structure of an arbitrary primitive form extends
analytically to all $t\in B$. 
\end{theorem}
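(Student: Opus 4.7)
The plan is to use the Eynard--Orantin topological recursion as a bridge between the Givental-type definition of $\A_t$, which manifestly degenerates along the caustic $B\setminus B_{\rm ss}$, and a representation of the ancestor correlators in terms of period integrals associated with the primitive form $\omega$, which extend holomorphically across $B\setminus B_{\rm ss}$. Schematically, the idea is to recognize the coefficients of $\A_t(\hbar;\t)$ as symplectic (Laplace-type) transforms of multi-differentials $W_{g,n}$ produced by topological recursion on a spectral curve built from the miniversal deformation $F(t,x)$, and then to observe that, even though individual ingredients of the recursion (such as local coordinates at simple critical points) blow up when critical values collide, the resulting correlators can be written as integrals of globally defined objects on the total space of the Lefschetz fibration $F(t,\cdot)$, hence are analytic on all of $B$.

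The steps I would carry out, in order, are as follows. First, I would recall the standard formulation of Givental's formula for $\A_t$ in terms of the $R$-matrix and canonical coordinates on $B_{\rm ss}$, rewriting the $n$-point correlators $\langle \tau_{k_1}\cdots\tau_{k_n}\rangle_{g,t}$ as contour integrals of the $(g,n)$-differentials obtained from a local recursion at the simple critical points of $F(t,\cdot)$. Second, I would identify these local ingredients with quantities built from the oscillatory/period integrals of $\omega$ over Lefschetz thimbles associated with $F(t,x)$; the choice of primitive form $\omega$ fixes the global Bergman kernel/propagator, and the descendent insertions become expansions of periods in suitable flat coordinates. Third, assuming this identification, I would reorganize the topological recursion so that the Eynard--Orantin kernel is expressed as an integral over a global cycle in the Milnor fiber rather than as a sum of local residues at colliding branch points; this is the form which makes analyticity in $t\in B$ transparent. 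Fourth, I would verify by induction on $2g-2+n$ that the globally written correlators agree with $\A_t$ on the semi-simple locus, using the fact that both sides satisfy the same recursion and share the same initial data $W_{0,1},W_{0,2}$.

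The main obstacle, as I see it, is the middle step: the matching of Givental's ancestor formula with the Eynard--Orantin output of the spectral curve attached to $(F,\omega)$. On $B_{\rm ss}$ one can in principle set this up branch point by branch point, but one needs uniform control so that, upon analytic continuation in $t$, the would-be poles arising from colliding critical points cancel. Concretely, the objects $R(t)$, $\Psi(t)$, and canonical coordinates $u_i(t)$ each develop singularities on the caustic, and the cancellation of these singularities is nontrivial; it is forced by expressing every ingredient as a period integral of $\omega$ along a relative cycle whose image under the Gauss--Manin connection is holomorphic across $B\setminus B_{\rm ss}$. The technical heart of the argument will therefore consist in establishing a global primitive-form version of the local topological recursion and proving that its output computes exactly the ancestor correlators on $B_{\rm ss}$.

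Once this is in place, the analytic extension of each coefficient of $\A_t(\hbar;\t)$ as a function of $t\in B$ follows because the period integrals over relative Milnor cycles depend analytically on $t\in B$, even when the critical values of $F(t,\cdot)$ collide. This yields Theorem~\ref{t1}.
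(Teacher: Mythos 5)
Your overall strategy --- run the Eynard--Orantin recursion and replace the sum of local residues at colliding branch points by a monodromy-invariant, single-valued object that visibly extends across the caustic --- is indeed the strategy of the paper. But the proposal stops exactly where the work begins, and the mechanism you suggest for the key step is not the one that works. Writing every ingredient as a period integral over a relative Milnor cycle does not give analyticity in $t$: the period vectors $I^{(k)}_\alpha(t,\la)$ of vanishing cycles are ramified along the discriminant and do \emph{not} extend holomorphically across $B\setminus B_{\rm ss}$ individually. What actually saves the day is the Bouchard--Eynard modification: at a generic caustic point $F(t_0,\cdot)$ acquires an $A_2$ critical point, and one replaces the two colliding $A_1$ residues by the single contour integral \eqref{integral} over the $1$-point cycles $\chi_1,\chi_2,\chi_3$ of the $A_2$ root system --- an integrand that is Weyl-group invariant, hence single valued and analytic near $t_0$. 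This forces the introduction of a new cubic term $\Omega_g^{\chi_1,\chi_2,\chi_3}$, and the nontrivial content (Theorem \ref{t2}, via Lemma \ref{L1}) is that the extra quadratic terms cancel against this cubic term, so that the modified integral agrees with the local recursion on $B_{\rm ss}$. Your proposal gestures at ``reorganizing the kernel globally'' but supplies no mechanism for this cancellation, which is the technical heart of the proof.

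The second gap is the initial data of the induction. The genus-zero three-point correlators are the structure constants of the Frobenius multiplication and are manifestly analytic, but the genus-one one-point correlators equal $\d_a F^{(1)}(t)$, the derivative of the $G$-function, built out of $R_1^{ii}$ and $\log\Delta_i$ --- quantities that individually blow up on the caustic. Their analyticity is a genuinely separate input (Hertling's Theorem 14.6 in \cite{He}) and cannot be extracted from the recursion, since the $g=1$ case of Theorem \ref{t2} only holds up to $\t$-independent terms. Your closing claim that ``analyticity follows because the period integrals depend analytically on $t$'' is false precisely at this step. Finally, the extension argument only covers \emph{generic} points of $B\setminus B_{\rm ss}$; the remaining locus has codimension at least $2$ and one must invoke Hartogs' extension theorem, which your proposal omits entirely.
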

The fact that
we have higher genus reconstruction at non-semisimple points looks
quite attractive on its own and it deserves a further
investigation. In particular, it will be interesting to find a
generalization of Givental's formula at various non-semisimple points and
see if similar formulas occur in Gromov--Witten theory as
well. Finally, let us point out that Theorem \ref{t1} is
very important for the Landau--Ginzburg/Calabi--Yau
correspondence (c.f. \cite{MR, MRS}) where it is necessary to restrict
the total ancestor potential to {\em marginal deformations} only and
the latter are always non-semisimple. 

The proof of Theorem \ref{t1} is based on the local Eynard--Orantin
recursion (see \cite{BOSS} and \cite{M}). We follow the approach in
\cite{M}. The main advantage of the recursion is that it gives a
reconstruction which does not make use of the higher genus theory of
the point, but it depends only on the Frobenius structure! Following
an idea of Bouchard--Eynard (see \cite{BE}) we
prove that the local recursion, which apriori is defined only for
$t\in B_{\rm ss}$, extends to generic points $t\in B\setminus{B_{\rm
    ss}}.$ Let us point out that at this point we use the fact that for a generic
$t\in B\setminus{B_{\rm ss}}$ the function $F(t,\cdot)$ has a singularity of
type $A_2$. From here one proves easily by induction that $A_t(\hbar;\q)$ extends
analytically for generic $t\in B\setminus{B_{\rm ss}}$ provided some
initial set of correlators of genus 0 and genus 1 are analytic. While
the analyticity of the genus 0 correlators is easy to verify, the
analyticity of the genus 1 ones is much more involved. However the
computation was already done by C. Hertling (see \cite{He}, Theorem
14.6). Therefore, to complete the proof of Theorem \ref{t1}, it remains
only to recall the Hartogue's extension theorem.

\section{Frobenius structures in singularity theory}\label{sec:sing}

Let us first recall some of the basic settings in singularity
theory. For more details we refer the reader to the excellent book \cite{AGV}. 
Let $f\colon(\C^{2l+1},0)\rightarrow (\C,0)$ be the germ of a holomorphic function with an isolated critical point of multiplicity $N$. Denote by 
\begin{equation*}
H = \C[[x_0,\ldots,x_{2l}]]/(\d_{x_0}f,\ldots,\d_{x_{2l}}f)
\end{equation*}
the \emph{local algebra} of the critical point; then $\dim H=N$. 

\begin{definition}\label{dmvdef}
A \emph{miniversal deformation} of $f$ is a germ of a holomorphic function $F\colon(\C^N\times \C^{2l+1},0)\to (\C,0)$ satisfying the following two properties:
\begin{enumerate}
\item[(1)]
$F$ is a deformation of $f$, i.e., $F(0,x)=f(x)$.
\item[(2)]
The partial derivatives $\d F/\d t^i$ $(1\leq i\leq N)$ project to a basis in the local algebra 
$$
\O_{\C^N,0}[[x_0,\dots,x_{2l}]]/\langle \d_{x_0}F,\dots,\d_{x_{2l}}F\rangle.
$$
\end{enumerate}
Here we denote by $t=(t^1,\dots,t^N)$ and $x=(x_0,\dots,x_{2l})$ the standard coordinates on $\C^N$ and $\C^{2l+1}$ respectively, and $\O_{\C^N,0}$ is the algebra of germs at $0$ of holomorphic functions on $\C^N.$
\end{definition}

We fix a representative of the holomorphic germ $F$, which we denote again by $F$, with a domain $X$ constructed as follows. Let 
\begin{equation*}
B_\rho^{2l+1}\subset \C^{2l+1} \,, \qquad 
B=B_\eta^N\subset \C^N \,, \qquad 
B_\delta^1\subset \C 
\end{equation*}
be balls with centers at $0$ and radii $\rho,\eta$, and $\delta$, respectively.
We set 
\begin{equation*}
S=B\times B_\delta^1 \subset\C^N\times\C \,, \quad 
X=(B\times B_\rho^{2l+1})\cap \phi^{-1}(S) \subset\C^N\times\C^{2l+1} \,,
\end{equation*}
where
\ben
\phi\colon B\times B_\rho^{2l+1}\to B\times\C \,,
\qquad (t,x)\mapsto (t,F(t,x)) \,.
\een 
This map induces a map $\phi\colon X\to S$ and we denote by $X_s$ or $X_{t,\la}$ the fiber 
\ben
X_s = X_{t,\la} = \{(t,x)\in X \,|\, F(t,x)=\la\} \,,\qquad s=(t,\la)\in S.
\een  
The number $\rho$ is chosen so small that for all $r$, $0<r\leq \rho$, the fiber $X_{0,0}$ intersects transversely the boundary $\d B_r^{2l+1}$ of the ball with radius $r$. Then we choose the numbers $\eta$ and $\delta$ small enough so that for all $s\in S$ the fiber $X_s$ intersects transversely the boundary $\d B_\rho^{2l+1}.$ Finally, we can assume without loss of generality that the critical values of $F$ are contained in a disk $B_{\delta_0}^1$ with radius $\delta_0<1<\delta$.

Let $\Si$ be the {\em discriminant} of the map $\phi$, i.e., the set
of all points $s\in S$ such that the fiber $X_s$ is singular. Put 
\begin{equation*}
S'=S\setminus{\Si} \subset\C^N\times\C \,, \qquad 
X'=\phi^{-1}(S') \subset X \subset\C^N\times\C^{2l+1} \,.
\end{equation*}
Then the map $\phi\colon X'\to S'$ is a smooth fibration, called 
the \emph{Milnor fibration}. In particular, all smooth fibers are diffeomorphic to $X_{0,1}$.
The middle homology group of the smooth fiber, equipped with the bilinear form
$(\cdot|\cdot)$ equal to $(-1)^l$ times the intersection form, 
is known as the \emph{Milnor lattice} $Q=H_{2l}(X_{0,1};\Z)$. 
For a generic point $s\in\Si$, the singularity of the fiber $X_s$
is Morse. Thus, every choice of a path from $(0,1)$ to $s$ avoiding $\Si$
leads to a group homomorphism $Q \to H_{2l}(X_s;\Z)$. The kernel of this
homomorphism is a free $\Z$-module of rank $1$. A generator  
$\al\in Q$ of the kernel is called a \emph{vanishing cycle} if 
$(\al|\al) = 2$. 

\subsection{Frobenius structure}\label{sec:frobenius}
Let $\T_B$ be the sheaf of holomorphic vector fields on $B$. Condition (2) in \deref{dmvdef} implies that the map 
$$
\d/\d{t^i}\mapsto \d F/\d t^i \mod \langle \d_{x_0} F,\dots,\d_{x_{2l}}F\rangle \qquad (1\leq i\leq N)
$$ 
induces an isomorphism between $\T_B$ and $p_*\O_C$, where $p\colon X\to B$ is the natural projection $(t,x)\mapsto t$ and 
\ben
\O_C:=\O_X/\langle \d_{x_0} F,\dots,\d_{x_{2l}}F\rangle
\een
is the structure sheaf of the critical set of $F$. In particular, since $\O_C$ is an algebra, the sheaf $\T_B$ is equipped with an associative commutative multiplication, which will be denoted by $\bullet.$ It induces a product $\bullet_t$ on the tangent space of every point $t\in B$. The class of the function $F$ in $\O_C$ defines a vector field $E\in \T_B$, called the {\em Euler vector field}. 

Given a holomorphic volume form $\omega$ on $(\C^{2l+1},0)$, possibly
depending on $t\in B$, we can equip $p_*\O_C$ with the so-called
\emph{residue pairing}
\ben
(\psi_1(t,x),\psi_2(t,x)) :=
\Big(\frac{1}{2\pi i}\Big)^{2l+1}\int_{\Gamma_\epsilon} 
\frac{\psi_1(t,y)\ \psi_2(t,y)}
{\d_{y_0} F \cdots \d_{y_{2l}} F } \,\omega\,,
\een
where $y=(y_0,\dots,y_{2l})$ is a $\omega$-unimodular coordinate system
(i.e. $\omega=dy_0\wedge \cdots \wedge dy_{2l}$) and the integration cycle $\Gamma_\epsilon$ is supported on 
$|\d_{y_0} F|= \cdots =|\d_{y_{2l}}F|=\epsilon$. 
Using that $\T_B\cong p_*\O_C$, we get a non-degenerate complex
bilinear form $(\ ,\ )$ on $\T_B$, which we still call residue
pairing.    

For $t\in B$ and $z\in\C^*$, let
$\B_{t,z}$ be a semi-infinite cycle in $\C^{2l+1}$ of the following type:
\ben
\B_{t,z}\in \lim_{\rho \to \infty} \, H_{2l+1}(\C^{2l+1},\{ 
  \mathrm{Re}\, z^{-1} F(t,x)<-\rho\} ;\C) \cong \C^N \,.
\een
The above homology groups form a vector bundle on $B\times \C^*$ equipped
naturally with a Gauss--Manin connection, and $\B=\B_{t,z}$ may be viewed as a
flat section. According to K.\ Saito's theory of {\em primitive forms} \cite{He,Sa,MS}
there exists a form $\omega$, called primitive, such that the oscillatory
integrals ($d^B$ is the de Rham differential on $B$)
\ben
J_\B(t,z):= (2\pi z)^{-l-\frac{1}{2}}\ (zd^B)\, 
\int_{\B_{t,z}} e^{z^{-1}F(t,x)}\omega \in \T_B^*
\een
are horizontal sections for the following connection: 
\beqa\label{frob_eq3}
\nabla_{\d/\d t^i} & = &  \nabla^{\rm L.C.}_{\d/\d t^i} - z^{-1}(\d_{t^i} \bullet_t),
\qquad 1\leq i\leq N \\
\label{frob_eq4}
\nabla_{\d/\d z} & = &  \d_z - z^{-1} \theta + z^{-2} E\bullet_t \,.
\eeqa
Here $\nabla^{\rm L.C.}$ is the Levi--Civita connection associated with the residue pairing and
\ben
\theta:=\nabla^{\rm L.C.}E-\Big(1-\frac{d}{2}\Big){\rm Id},
\een 
where $d$ is some complex number. 
In particular, this means that the residue pairing and the multiplication $\bullet$ form a {\em Frobenius structure} on $B$
of conformal dimension $d$ with identity $1$ and Euler vector field $E$. For the definition of a Frobenius structure we refer to \cite{Du} .

Assume that a primitive form $\omega$ is chosen. Note that the flatness of the Gauss--Manin connection implies that the residue pairing is flat. Denote by $(\tau_1,\dots, \tau_N)$ a coordinate system on $B$ that is flat with respect to the residue pairing, and write $\partial_i$ for the vector field $\partial/\partial{\tau_i}$. We can further modify the flat coordinate system so that the Euler field is the sum of a constant and linear fields: 
\ben
E=\sum_{i=1}^N (1-d_i) \tau_i \partial_{i} + \sum_{i=1}^N \rho_i \partial_i \,.
\een
The constant part represents the class of $f$ in $H$, and the spectrum
of degrees $d_1,\dots, d_N$ ranges from $0$ to $d.$ 
Note that in the flat coordinates $\tau_i$ the operator $\theta$ (called sometimes the \emph{Hodge grading operator}) assumes diagonal form:
\ben
\theta(\d_i) = \Bigl(\frac{d}{2}-d_i\Bigr) \d_i \,, \qquad\quad 1\leq i\leq N \,.
\een
Finally, let us trivialize the tangent and the cotangent bundle. We have the following identifications:
\ben
T^*B\cong TB\cong B\times T_0B\cong B\times H,
\een
where $H$ is the Jacobi algebra of $f$, the first isomorphism is given by the residue pairing, the
second by the Levi--Cevita connection of the flat residue pairing, and the last one is the
Kodaira--Spencer isomorphism 
\beq\label{KS}
T_0B \cong H,\quad \d/\d t_i\mapsto \left. \d_{t_i}F\right|_{t=0} \ {\rm mod}\ 
 ( f_{x_0},\dots,f_{x_{2l}}).
\eeq
Let $v_i\in H$ be the images of the flat vector fields $\d_i$
via the Kodaira--Spencer isomorphism \eqref{KS}. We assume that
$v_N=1$ is the unity of the algebra $H$. 

\subsection{Period integrals}\label{sec:periods}
Given a middle homology class $\al\in H_{2l}(X_{0,1};\C)$, we denote by $\al_{t,\la}$ its parallel transport to 
the Milnor fiber $X_{t,\la}$. Let $d^{-1}\omega$ be any $2l$-form whose 
differential is $\omega$. We can integrate $d^{-1}\omega$ over $\al_{t,\la}$
and obtain multivalued functions of $\la$ and $t$ 
ramified around the discriminant in $S$ (over which 
the Milnor fibers become singular). 
To $\al\in H_{2l}(X_{0,1};\C)$, we associate the {\em 
period vectors} $I^{(k)}_\al(t,\la)\in H\ (k\in \Z)$ defined by
\beq\label{periods} 
(I^{(k)}_\al(t,\la), v_i):= 
-(2\pi)^{-l} \d_\la^{l+k} \d_i \int_{\al_{t,\la}} d^{-1}\omega \,,
\qquad 1\leq i\leq N \,.
\eeq 
Note that this definition is consistent with the operation of stabilization of
singularities. Namely, adding the squares of two new variables does not change
the right-hand side, since it is offset by an extra differentiation 
$(2\pi)^{-1}\partial_{\la}$. In particular, this defines the period
vector for a negative value of $k\geq -l$ with $l$ as large as one wishes.
Note that, by definition, we have 
\ben
\d_\la I^{(k)}_\al(t,\la) = I^{(k+1)}_\al(t,\la) \,, \qquad k\in\Z\,.
\een
The following lemma is a consequence of the definition
of a primitive form.
\begin{lemma}\label{lem:periods}
The period vectors \eqref{periods} satisfy the differential 
equations
\begin{align}\label{periods:de1}
\d_i I_\al^{(k)} &= -v_i\bullet_t(\d_\la I_\al^{(k)})\,, \qquad\quad 1\leq i\leq N \,, 
\\ \label{periods:de2}
(\la-E\bullet_t) \d_{\la}I_\al^{(k)} &= \Bigl(\theta-k-\frac12\Bigr) I_\al^{(k)} \,.
\end{align}
\end{lemma}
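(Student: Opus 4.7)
The plan is to deduce both identities from the horizontality of the oscillatory integrals $J_\B(t,z)$ under the connection \eqref{frob_eq3}--\eqref{frob_eq4}, by matching each $J_\B$ with a generating series of period vectors $I_\al^{(k)}$ via an inverse Laplace transform in $z$.

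Concretely, for a given $\al \in H_{2l}(X_{0,1};\C)$ I would take $\B_{t,z}$ to be the semi-infinite cycle obtained by sweeping $\al_{t,\la}$ along a steepest-descent path in the $\la$-plane for which $\Re(\la/z) \to -\infty$ at infinity. The Gelfand--Leray decomposition $\omega = dF \wedge (\omega/dF)$ converts the oscillatory integral into a genuine Laplace integral in $\la$, roughly of the form $\int_{\B_{t,z}} e^{F/z}\omega = \int_C e^{\la/z}\, \d_\la\bigl(\int_{\al_{t,\la}} d^{-1}\omega\bigr)\, d\la$. Differentiation in $t^i$ combined with repeated integration by parts in $\la$ then shows that, up to an explicit $z$-dependent prefactor coming from $(2\pi z)^{-l-1/2} (z d^B)$, the components $(I_\al^{(k)}, v_i)$ from \eqref{periods} appear exactly as the coefficients of powers of $z$ in the asymptotic expansion of $(J_\B, v_i)$ near $z = 0$.

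Once this dictionary is in place, both equations of the lemma follow by substituting the expansion into the horizontality relations $\nabla J_\B = 0$ and comparing powers of $z$. Equation \eqref{frob_eq3}, which in flat coordinates reads $\d_i J_\B = z^{-1}(v_i\bullet_t)J_\B$, yields $\d_i I_\al^{(k)} = -(v_i\bullet_t)\,I_\al^{(k+1)}$, and the identity $I_\al^{(k+1)} = \d_\la I_\al^{(k)}$ recorded just before the lemma turns this into \eqref{periods:de1}. Similarly, \eqref{frob_eq4} produces $E\bullet_t\, I_\al^{(k+1)} = (\theta - k - \tfrac12)\,I_\al^{(k)} + \la\, I_\al^{(k+1)}$, which rearranges directly into \eqref{periods:de2}.

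The main obstacle is the setup of this Laplace-transform correspondence: one must verify that the prefactor $(2\pi)^{-l-1/2}$ and the differentiation $\d_\la^{l+k}$ built into the definition \eqref{periods} conspire precisely so that each ODE for $I_\al^{(k)}$ arises as exactly one coefficient in the expansion of a horizontality relation, and one has to be careful with the fractional powers of $z$ introduced by the prefactor $z^{-l-1/2}$ in the definition of $J_\B$. Once this matching is pinned down, both equations reduce to a mechanical comparison of coefficients.
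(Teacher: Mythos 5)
Your proposal follows exactly the paper's (very terse) justification of this lemma: the paper merely remarks that the system \eqref{periods:de1}--\eqref{periods:de2} is the Laplace transform of the connection \eqref{frob_eq3}--\eqref{frob_eq4} and that the oscillatory integrals are horizontal sections, which is precisely the dictionary you set up via the Gelfand--Leray form and integration by parts in $\la$. One small bookkeeping caveat: as written, your intermediate identity $E\bullet_t I_\al^{(k+1)} = (\theta-k-\tfrac12)I_\al^{(k)} + \la I_\al^{(k+1)}$ rearranges to $(\la-E\bullet_t)\d_\la I_\al^{(k)} = -(\theta-k-\tfrac12)I_\al^{(k)}$, so one sign in your coefficient comparison needs adjusting, but this does not affect the validity of the approach.
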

The connection corresponding to the differential equations
\eqref{periods:de1}--\eqref{periods:de2} is a Laplace transform of the
connection \eqref{frob_eq3}--\eqref{frob_eq4}. In particular, since the
oscillatory integrals are related to the period vectors via the 
Laplace transform, Lemma \ref{lem:periods} follows from the fact that
the oscillator integrals are horizontal sections for the connection
\eqref{frob_eq3}--\eqref{frob_eq4}. 

Using equation \eqref{periods:de2},
we analytically extend the period vectors to all 
$|\la|>\delta$. It follows from \eqref{periods:de1} that the period vectors 
have the symmetry
\beq\label{tinv}
I^{(k)}_\al(t,\la)\ = \ I^{(k)}_\al(t-\la\one,0) \,,
\eeq  
where $t \mapsto t-\la\one $ denotes the time-$\la$ translation 
in the direction of the flat vector field $\one $ obtained from $1\in H$. 
(The latter represents identity elements for all the products
$\bullet_t$.)

Let $t\in B_{ss}$ be a semi-simple point; then the period vector
$I^{(0)}_\al(t,\la)$ could have singularities only at the critical
values $u_i(t)$. Moreover, using equation \eqref{frob_eq4} it is easy
to see that the order of the pole at a given singular point
$\la=u_i(t)$ is at most $\frac{1}{2}$. A simple corollary of this
observation, which will be used repeatedly, is that if the cycle $\al$ is invariant with respect to
the local monodromy around $\la=u_i(t)$; then the corresponding period
vectors $I^{(n)}_\al(t,\la)$ must be analytic in a neighborhood of
$\la=u_i(t)$.

\subsection{Stationary phase asymptotic}\label{sec:asymptotic}

Let $u_i(t)$ ($1\leq i\leq N$) be the critical values of $F(t,\cdot)$. For a generic $t$, they form a local coordinate system on $B$ in which the Frobenius multiplication and the residue pairing are diagonal. Namely,
\ben
\d/\d u_i \, \bullet_t\, \d/\d u_j = \delta_{ij}\d/\d u_j \,,\quad 
\(\d/\d u_i,\d/\d u_j \) = \delta_{ij}/\Delta_i \,,
\een
where $\Delta_i$ is the Hessian of $F$ with respect to the volume form $\omega$ at the critical point corresponding to the critical value $u_i.$ 
Therefore, the Frobenius structure is \emph{semi-simple}.
We denote by $\Psi_t$ the following linear isomorphism
\ben
\Psi_t\colon \C^N\rightarrow T_t B \,,\qquad 
e_i\mapsto \sqrt{\Delta_i}\d/\d u_i \,,
\een
where $\{e_1,\dots,e_N\}$ is the standard basis for $\C^N$.

Let $U_t$ be the diagonal matrix with entries $u_1(t),\ldots, u_N(t)$. 
According to Givental \cite{G1}, the system of differential equations (cf.\ \eqref{frob_eq3}, \eqref{frob_eq4})
\begin{align}\label{de_1}
z\d_i J(t,z) &= v_i\bullet_t J(t,z) \,,\qquad\quad 1\leq i\leq N \,,
\\
\label{de_2}
z\d_z J(t,z) &= (\theta-z^{-1} E\bullet_t) J(t,z)
\end{align}
has a unique formal asymptotic solution of the form $\Psi_t R_t(z) e^{U_t/z}$, 
where 
\ben
R_t(z)=1+R_1(t)z+R_2(t)z^2+\cdots \,,
\een
and $R_k(t)$ are linear operators on $\C^N$ uniquely determined from the differential 
equations \eqref{de_1} and \eqref{de_2}.

We will make use of the following formal series
\beq\label{falpha}
\f_\al(t,\la;z) = \sum_{k\in \Z} I^{(k)}_\al(t,\la) \, (-z)^k \, ,
\eeq
and
\beq\label{phi-alpha}
\phi_\al(t,\la;z) = \sum_{k\in \Z} I^{(k+1)}_\al(t,\la) \,d\la\,  (-z)^k \, .
\eeq
Note that for $A_1$-singularity $F(t,x)=x^2/2+t$ we have
$u:=u_1(t)=t.$ Up to a sign there is a unique vanishing cycle. The
corresponding series \eqref{falpha}  and \eqref{phi-alpha} will be
denoted simply by $\f_{A_1}(t,\la;z)$ and $\phi_{A_1}(t,\la;z).$ The
period vectors can be computed explicitly and they are given by the
following formulas:
\ben
\begin{aligned}
I^{(k)}_{A_1}(u,\la) & = (-1)^k\, \frac{(2k-1)!!}{2^{k-1/2}}\,
(\la-u)^{-k-1/2},\quad k\geq 0 \\
I^{(-k-1)}_{A_1}(u,\la) & = 2\, \frac{2^{k+1/2}}{(2k+1)!!}\,
(\la-u)^{k+1/2}, \quad k\geq 0.
\end{aligned}
\een
The key lemma (see \cite{G3}) is the following.
\begin{lemma}\label{vanishing_a1}
Let\/ $t\in B$ be generic and\/ $\be$ be a vanishing cycle vanishing over the point\/ $(t,u_i(t))\in \Si$. Then for all\/ $\la$ near\/ $u_i:=u_i(t)$, we have
\ben
\f_{\be}(t,\la;z) = \Psi_t R_t(z)\,  e_i\,  \f_{A_1}(u_i,\la;z)\,.
\een
\end{lemma}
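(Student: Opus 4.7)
The strategy is to identify both sides of the asserted equality as formal solutions of the same system of linear differential equations in $(t,\la,z)$, and then single out the specific solution via its ramification profile and its leading singular coefficient at $\la=u_i$.

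First, I would package the equations of Lemma \ref{lem:periods} into a system for the generating series $\f_\be$. Since $\d_\la I_\be^{(k)}=I_\be^{(k+1)}$, the operator $\d_\la$ acts formally on $\f_\be$ as multiplication by $-z^{-1}$, and rearranging \eqref{periods:de1}--\eqref{periods:de2} yields
\[ (\d_i - z^{-1} v_i\bullet_t)\f_\be = 0, \qquad z\d_z \f_\be = \bigl(\theta - \tfrac{1}{2} + z^{-1}(\la - E\bullet_t)\bigr)\f_\be. \]
The LHS of the lemma satisfies this system by construction.

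Next, I would verify that the RHS $\Psi_t R_t(z) e_i \f_{A_1}(u_i,\la;z)$ satisfies the same system. A direct computation from the explicit formulas for $I^{(k)}_{A_1}$ yields the scalar analogue
\[ z\d_z \f_{A_1}(u_i,\la;z) = \bigl(-\tfrac{1}{2} + z^{-1}(\la - u_i)\bigr)\f_{A_1}(u_i,\la;z), \]
reflecting that for the $A_1$ Frobenius manifold $\theta=0$ and $E\bullet$ is scalar multiplication by the critical value. Combining this with the defining property of $R_t(z)$---namely that $\Psi_t^{-1}(E\bullet_t)\Psi_t=U_t$ and $\Psi_t R_t(z) e^{U_t/z}$ intertwines the diagonal canonical-coordinate connection with \eqref{de_1}--\eqref{de_2}---one checks that the RHS solves the displayed system.

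For the final step, both sides have the same ramification in $\la$. By Picard--Lefschetz, the vanishing cycle $\be$ changes sign under the monodromy around $u_i$ and is monodromy-invariant around the other critical values $u_j$ (which, for generic $t$, are isolated $A_1$-type singularities well separated from $u_i$), so $\f_\be$ is single-valued away from $u_i$ and has at worst a $(\la-u_i)^{-1/2}$ branch at $u_i$; the RHS has the same ramification by construction. The subspace of solutions of the ODE system with this ramification profile is one-dimensional, corresponding to the $(-1)$-eigenspace of the local monodromy in the Milnor lattice, so the two sides agree up to a scalar formal power series in $z$. Matching the leading singular coefficient of $I^{(0)}_\be$ at $\la=u_i$---which by \eqref{periods:de2} lies along $\Psi_t e_i=\sqrt{\Delta_i}\,\d/\d u_i$, with coefficient fixed by the normalization $(\be|\be)=2$---against the explicit formula $I^{(0)}_{A_1}(u_i,\la)=\sqrt{2}\,(\la-u_i)^{-1/2}$ (and using $R_t(0)=1$) then pins the scalar to $1$.

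I anticipate the main obstacle to be the intertwining step: the $z\d_z$-equation mixes $\la$- and $z$-dependence through $z^{-1}(\la-E\bullet_t)$, so verifying that $R_t(z)$ carries the scalar $A_1$ equation onto its $N$-dimensional counterpart requires careful use of both the diagonalization $\Psi_t^{-1}(E\bullet_t)\Psi_t=U_t$ and the recursive definition of the coefficients $R_k(t)$ from \eqref{de_1}--\eqref{de_2}. A secondary subtlety is the leading-coefficient match at $\la=u_i$: one must unwind the residue pairing together with the Morse reduction of $F$ to the local normal form near the critical point projecting to $u_i$, to see that the factor $\sqrt{\Delta_i}$ from $\Psi_t$ combines with the $\sqrt{2}$ from the vanishing-cycle normalization to force the overall scalar to~$1$.
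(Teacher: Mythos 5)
The paper does not actually prove Lemma \ref{vanishing_a1}; it is quoted as ``the key lemma'' with a citation to Givental \cite{G3}, where the argument runs through the stationary phase expansion of the oscillatory integrals $\int_{\B}e^{F/z}\omega$ over Lefschetz thimbles (whose asymptotics near the critical point with value $u_i$ are given by the $i$-th column of $\Psi_t R_t(z)e^{U_t/z}$) and the fact that the period vectors are obtained from these by Laplace transform in $\la$, which converts the $A_1$ exponential prefactor into $\f_{A_1}(u_i,\la;z)$. Your route is a genuine alternative and it is sound: the intertwining computation does work (from $z\d_j(\Psi_tR_t)+\Psi_tR_t\,\d_jU_t=v_j\bullet\Psi_tR_t$ and $z\d_z(\Psi_tR_t)=(\theta-z^{-1}E\bullet)\Psi_tR_t+z^{-1}\Psi_tR_tU_t$ the terms involving $\d_jU_t\,e_i=(\d_ju_i)e_i$ and $U_te_i=u_ie_i$ cancel against the derivatives of $\f_{A_1}(u_i(t),\la;z)$), and the uniqueness step is legitimate because \eqref{periods:de2} is the Gauss--Manin connection in the primitive-form frame, so its local solution space at $\la$ near $u_i$ is exactly the $N$-dimensional span of the periods, on which the local monodromy acts by the Picard--Lefschetz reflection with one-dimensional $(-1)$-eigenspace. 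What your approach buys is independence from the oscillatory-integral picture: everything is phrased in terms of the Frobenius-manifold data and the $R$-matrix, which fits the spirit of this paper. Three small corrections: (i) your claim that $\be$ is monodromy-invariant around the other critical values $u_j$ is false in general (the intersection numbers $(\be|\be_j)$ need not vanish), but it is also irrelevant, since the lemma is a statement about germs at $\la=u_i$ and only the local monodromy there enters; (ii) the undetermined factor is not an arbitrary scalar series $c(z)$ but a single constant, because the compatibility $\d_\la[\cdot]_k=[\cdot]_{k+1}$ forces all the coefficientwise scalars $c_k$ to coincide; (iii) the normalization computation you defer (matching $\sqrt{2}(\la-u_i)^{-1/2}\Psi_te_i$ against the Morse-lemma asymptotics of $\int_{\be_{t,\la}}d^{-1}\omega$, with the sign absorbed into the choice of orientation of $\be$) is the only place where actual singularity-theoretic input beyond the ODEs is needed, and you correctly identify it as such, though you do not carry it out.
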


\section{Symplectic loop space formalism}\label{sec4}

The goal of this section is to introduce Givental's quantization
formalism (see \cite{G2}) and use it to define the higher genus potentials in
singularity theory.

\subsection{Symplectic structure and quantization}\label{ssympl}

The space $\H:=H(\!(z^{-1})\!)$ of formal Laurent series in $z^{-1}$ with
coefficients in $H$ is equipped with the following \emph{symplectic form}: 
\ben
\Omega(\phi_1,\phi_2):={\rm Res}_z \(\phi_1(-z),\phi_2(z)\) \,,
\qquad \phi_1,\phi_2\in\H \,,
\een 
where, as before, $(,)$ denotes the residue pairing on $H$
and the formal residue ${\rm Res}_z$ gives the coefficient in front of $z^{-1}$.

Let $\{v_i\}_{i=1}^{ N}$ and $\{v^i\}_{i=1}^N$ be dual bases of $H$ with respect to the residue pairing.
Then
\ben
\Omega(v^i(-z)^{-k-1}, v_j z^l) = \de_{ij} \de_{kl} \,.
\een
Hence, a Darboux coordinate system is provided by the linear functions $q_k^i$, $p_{k,i}$ on $\H$ given by:
\ben
q_k^i = \Omega(v^i(-z)^{-k-1}, \cdot) \,, \qquad
p_{k,i} = \Omega(\cdot, v_i z^k) \,.
\een
In other words,
\ben
\phi(z) = \sum_{k=0}^\infty \sum_{i=1}^N q_k^i(\phi) v_i z^k +  
\sum_{k=0}^\infty \sum_{i=1}^N p_{k,i}(\phi) v^i(-z)^{-k-1} \,,
\qquad \phi\in\H \,.
\een  
The first of the above sums will be denoted $\phi^+(z)$ and the second
$\phi^-(z)$.

The \emph{quantization} of linear functions on $\H$ is given by the rules:
\ben
\widehat q_k^i = \hbar^{-1/2} q_k^i \,, \qquad
\widehat p_{k,i} = \hbar^{1/2} \frac{\d}{\d q_k^i} \,.
\een
Here and further, $\hbar$ is a formal variable. We will denote by $\C_\hbar$ the field
$\C(\!(\hbar^{1/2})\!)$.

Every $\phi(z)\in\H$ gives rise to the linear function $\Omega(\phi,\cdot)$ on $\H$,
so we can define the quantization $\widehat\phi$. Explicitly,
\beq\label{phihat}
\widehat\phi = -\hbar^{1/2} \sum_{k=0}^\infty \sum_{i=1}^N q_k^i(\phi) \frac{\d}{\d q_k^i}
+ \hbar^{-1/2} \sum_{k=0}^\infty \sum_{i=1}^N p_{k,i}(\phi) q_k^i \,.
\eeq
The above formula makes sense also for $\phi(z)\in H[[z,z^{-1}]]$ if we interpret $\widehat\phi$
as a formal differential operator in the variables $q_k^i$ with coefficients in $\C_\hbar$.

\begin{lemma}\label{lphihat}
For all\/ $\phi_1,\phi_2\in\H$, we have\/ $[\widehat\phi_1, \widehat\phi_2] = \Omega(\phi_1,\phi_2)$.
\end{lemma}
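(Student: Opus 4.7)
The plan is to reduce the claim to a computation on the Darboux basis by bilinearity. Both sides of the identity are bilinear in $(\phi_1,\phi_2)$: the right-hand side by the definition of $\Omega$, and the left-hand side because the quantization formula \eqref{phihat} depends linearly on the coefficients $q_k^i(\phi)$ and $p_{k,i}(\phi)$, and hence linearly on $\phi$. So it suffices to verify the identity when $\phi_1$ and $\phi_2$ each run over the Darboux basis $\{v_i z^k\}\cup\{v^i(-z)^{-k-1}\}$ of $\H$.

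First I would read off the quantizations of these basis vectors directly from \eqref{phihat}. For $\phi=v_i z^k$ only $q_k^i(\phi)=1$ is nonzero, so $\widehat{v_i z^k}=-\hbar^{1/2}\,\partial/\partial q_k^i$. For $\phi=v^i(-z)^{-k-1}$ only $p_{k,i}(\phi)=1$ is nonzero, so $\widehat{v^i(-z)^{-k-1}}=\hbar^{-1/2}\,q_k^i$. The three types of commutators among these basis operators are then immediate: two differentiation operators $-\hbar^{1/2}\partial/\partial q_k^i$ and $-\hbar^{1/2}\partial/\partial q_l^j$ commute, two multiplication operators $\hbar^{-1/2}q_k^i$ and $\hbar^{-1/2}q_l^j$ commute, and the mixed commutator equals $\delta_{ij}\delta_{kl}$ via the canonical relation $[q_k^i,\partial/\partial q_l^j]=-\delta_{ij}\delta_{kl}$.

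These three values match exactly the symplectic pairings recorded just above the definition of the Darboux coordinates: $\Omega(v_i z^k,v_j z^l)=0$ and $\Omega(v^i(-z)^{-k-1},v^j(-z)^{-l-1})=0$, while $\Omega(v^i(-z)^{-k-1},v_j z^l)=\delta_{ij}\delta_{kl}$; the remaining case then follows by antisymmetry of $\Omega$. No real obstacle is expected: the only subtlety worth noting is that for a generic $\phi\in\H$ the sum in \eqref{phihat} is infinite, so $\widehat\phi$ must be interpreted as a formal differential operator on the polynomial algebra $\C[q_k^i]_{k\geq 0,\,1\leq i\leq N}$; since the commutation identity is checked monomial by monomial and its right-hand side is scalar, this raises no convergence issue when extending bilinearly to arbitrary $\phi_1,\phi_2\in\H$.
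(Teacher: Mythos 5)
Your proof is correct and follows exactly the paper's own argument: reduce by bilinearity to the Darboux basis vectors $v_i z^k$ and $v^i(-z)^{-k-1}$, where the identity holds by direct computation of the quantized operators. The paper simply states this in one line; your version spells out the three commutator cases explicitly.
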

\begin{proof}
It is enough to check this for the basis vectors $v^i(-z)^{-k-1}$, $v_i z^k$, in which case it is true by definition.
\end{proof}
It is known that the operator series
$
\mathcal{R}_t(z):=\Psi_t R_t(z)\Psi_t^{-1}
$
is a symplectic transformation. Moreover, it has the form $e^{A(z)},$ where $A(z)$ is an infinitesimal symplectic transformation. 
A linear operator $A(z)$ on $\H:=H(\!(z^{-1})\!)$ is infinitesimal symplectic if and only if the map $\phi\in \H \mapsto A\phi\in \H$ is a Hamiltonian vector field with a Hamiltonian given by the quadratic function $h_A(\phi) = \frac{1}{2}\Omega(A\phi,\phi)$. 
By definition, the \emph{quantization} of $e^{A(z)}$ is given by the differential operator $e^{\widehat{h}_A},$ where the quadratic Hamiltonians are quantized according to the following rules:
\ben
(p_{k,i}p_{l,j})\sphat = \hbar\frac{\d^2}{\d q_k^i\d q_l^j} \,,\quad 
(p_{k,i}q_l^j)\sphat = (q_l^jp_{k,i})\sphat = q_l^j\frac{\d}{\d q_k^i} \,,\quad
(q_k^iq_l^j)\sphat = \frac1{\hbar} q_k^iq_l^j \, .
\een     

\subsection{The total ancestor potential }\label{asop}
Let us make the following convention. Given a vector 
\ben
\q(z) = \sum_{k=0}^\infty q_k z^k \in H[z] \,, \qquad
q_k=\sum_{i=1}^N q_k^iv_i \in H \,,
\een
its coefficients give rise to a vector sequence  $q_0,q_1,\dots$.
By definition, a {\em formal  function} on $H[z]$,
defined in the formal neighborhood of a given point $c(z)\in H[z]$, is a formal power
series in $q_0-c_0, q_1-c_1,\dots$. Note that every operator acting on
$H[z]$ continuously in the appropriate formal sense induces an
operator acting on formal functions.

The \emph{Witten--Kontsevich tau-function} is the following generating series:
\beq\label{D:pt}
\D_{\rm pt}(\hbar;Q(z))=\exp\Big( \sum_{g,n}\frac{1}{n!}\hbar^{g-1}\int_{\overline{\M}_{g,n}}\prod_{i=1}^n (Q(\psi_i)+\psi_i)\Big),
\eeq
where $Q_0,Q_1,\ldots$ are formal variables, and $\psi_i$ ($1\leq
i\leq n$) are the first Chern classes of the cotangent line bundles on
$\overline{\M}_{g,n}$ (see \cite{W1,Ko1}).
It is interpreted as a formal
function of $Q(z)=\sum_{k=0}^\infty Q_k z^k\in \C[z]$, defined in the
formal neighborhood of $-z$. In other words, $\D_{\rm pt}$ is a formal power series
in $Q_0,Q_1+1,Q_2,Q_3,\dots$ with coefficients in $\C(\!(\hbar)\!)$.

Let $t\in B$ be a {\em semi-simple} point, so that the critical values
$u_i(t)$ ($1\leq i\leq N$) of $F(t,\cdot)$ form a coordinate
system. Recall also the flat coordinates
$\tau=(\tau_1(t),\dots,\tau_N(t))$ of $t$. The {\em total ancestor
  potential} of the singularity is defined as follows
\beq\label{ancestor}
\A_t(\hbar; \q(z)) = \widehat{\mathcal{R}}_t\ \prod_{i=1}^N\, \D_{\rm pt}(\hbar\Delta_i;\leftexp{i}{\q}(z)) \in \C_\hbar[[q_0,q_1+\one,q_2\dots]],
\eeq
where 
$
\mathcal{R}_t(z):=\Psi_t R_t(z)\Psi_t^{-1}
$
and 
\ben
\leftexp{i}{\q}(z) = \sum_{k=0}^\infty \sum_{a=1}^N \ \frac{\d u_i}{\d
  \tau_a}\, q_k^a \,z^k \,.
\een
It will be convenient also to consider another set $\t=\{t_k^i\}$ of formal variables
related to $\q$ via the so called {\em dilaton shift:}
\beq\label{dilaton}
t_k^i=
\begin{cases}
q_k^i & \mbox{ if } k\neq 1, i\neq N \\
q_1^N+1 & \mbox{ otherwise,}
\end{cases}
\eeq
where recall that $v_N=1\in H$ is the unit for the Frobenius
multiplication. 

\section{The ancestors for generic non-semisimple points}

Let $t_0\in B$ be a point, such that the function $F(t_0,\cdot
):B_\rho^{2l+1}\to \C$ has $N-2$ Morse critical points $\xi_i^0(1\leq
i\leq N-2)$ and a critical
point $\xi_{N-1}^0$ of type $A_2$, i.e., we can choose local coordinate system
$y=(y_0,y_1,\dots,y_{2l})$ centered at the critical point $\xi_{N-1}^0$ such that 
\ben
F(t_0,y)=u_{N-1}^0+y_0^3+\sum_{i=1}^{2l} y_i^2,\quad u_{N-1}^0:=F(t_0,\xi_{N-1}^0).
\een 
Let us assume that the critical values $u_i^0:=F(t_0,\xi_i^0)$, $1\leq
i\leq N-1$, are pairwise distinct. Note that $t_0\in B\setminus{B_{ss}}$
and that all other points in $B\setminus{B_{ss}}$ form an analytic
subvariety in $B$ of codimension at least 2. 

Let us choose a small disc $D_i$ with center the critical value
$u_i^0$ for each $i=1,2,\dots, N-1$. We are going to let $t$ vary in a
small open neighborhood $U$ of $t_0$, so that the critical values
$u_i(t)$, $1\leq i\leq N$ of $F(t,\cdot)$  satisfy the conditions
\ben
u_i(t)\in D_i,\quad 1\leq i\leq N-2,\quad u_{N-1}(t),u_N(t)\in D_{N-1},
\een
and 
\ben
u_i(t_0) = u_i^0,\quad 1\leq i\leq N-2,\quad u_{N-1}(t_0)=u_N(t_0) = u_{N-1}^0.
\een
Let us fix an arbitrary $t\in U\cap B_{ss}$. 

\subsection{Twisted representations of the local Heisenberg algebras}
We fix a reference point $p_i$ in the complement $D_i^*$ to the
critical values in $D_i$ and denote by 
\ben
\Delta_i\subset
H_{2l}(X_{t,p_i};\Z), \quad 1\leq i\leq N-1
\een 
the cycles vanishing over
the critical values contained in $D_i$. Note that together with the
intersection pairing $(\cdot |\cdot )$ the sets $\Delta_i=\{\pm
\beta_i\}$, $1\leq i\leq N-2$ are root systems of type $A_1$, while 
\ben
\Delta_{N-1} = \{\alpha:=\beta_{N-1},\beta:=\beta_N,\alpha+\beta,-\alpha,-\beta,-\alpha-\beta\}
\een
is a root system of type $A_2$. 

Let us fix $\Delta=\Delta_i$, $D:=D_i$, and $D^* = D_i^*$ for some
$i=1,2,\dots,N-1$.  We denote by $Q_\Delta$ the corresponding
root lattice and put $\lieh_\Delta=\C\otimes_\Z Q_\Delta$. The vector
space $\widehat{\lieh}_\Delta = \lieh_\Delta[t,t^{-1}] \oplus \C\, K$ has a natural
structure of a Heisenberg Lie algebra with Lie bracket given by 
\ben
[\alpha\,t^m,\beta\,t^n]=m\delta_{m+n,0}\,(\alpha|\beta)\,K\ .
\een
We denote by $\F_\Delta={\rm Sym}(\lieh[t^{-1}]t^{-1})$ the Fock space of
$\widehat{\lieh}_\Delta$, i.e., the unique irreducible highest weight representation
of $\widehat{\lieh}_\Delta $, such that the center $K$ acts by 1 and
$\widehat{\lieh}_\Delta^+:=\lieh_\Delta[t]$ annihilates the {\em vacuum}
$1$. The notation $t$ that appears here has nothing to do with the
deformation parameters that we introduced before. In order to avoid
confusion, from now on we put $a_m:=a\,t^m$, $a\in \lieh_{\Delta},\
m\in \Z$. 
   
Following \cite{BM}, we define {\em bosonic fields}
\beq\label{bos-fields}
X_t(\alpha,\la) = \d_\la\, \widehat{\f_\alpha}(t,\la),\quad 
\eeq
and {\em propagators}
\beq\label{propagator}
P_{\alpha,\beta}(t,\la;\mu-\la ) = \d_\la\d_\mu\, \lim_{\ge\to 0}\
\int_{t-(u_i(t)+\ge){\bf 1}} ^{t-\la\,{\bf 1} } I_\alpha^{(0)}(t',\mu-\la )\bullet I_\beta^{(0)}(t',0) 
\eeq
where $\alpha,\beta \in {\Delta}$, for each $\la \in D^*$ we pick
$\mu\in D^*$ sufficiently close to $\la$, and the
integration is along a path such that $\beta_{t',0}\in
H_{2l}(X_{t',0};\Z)$ vanishes at the end point $t'=t-u_i(t)\one$. The
integrand is a 1-form obtained as follows: each period vector is by
definition a co-vector in $T_{t'}^*B$; we identify vectors and
co-vectors via the residue pairing and hence the Frobenius
multiplication in $T_{t'}B$ induces a multiplication on $T^*_tB$. 
Finally, we can extend the definition of the propagator
bi-linearly to all $\alpha,\beta\in \lieh_\Delta$.

The Laurent series expansion of the propagator \eqref{propagator} at
$\mu=\la$ (see \cite{BM} Lemma 7.5) has the following form:
\ben
P_{\alpha,\beta}(t,\la;\mu-\la) = \frac{(\alpha|\beta)}{(\la-\mu)^2} +
\sum_{k=0}^\infty P_{\alpha,\beta}^k(t,\la)\, (\mu-\la)^k.
\een 
The above series has a non-zero radius of convergence and the
coefficients $P^k_{\alpha,\beta}(t,\la)$ are multi-valued analytic
functions on $D^*$, i.e., the analytic continuation in $\la$ along any
path in $D^*$ is compatible with the monodromy action on $\alpha$ and
$\beta$. The latter statement follows from Lemmas 7.1--7.3 in
\cite{BM}, which can be applied in our settings as well because the root
system $\Delta$ is of type $A$. 

For\/ $a\in\F_\Delta$ of the form
\ben
a = \al^1_{(-k_1-1)} \cdots \al^{r}_{(-k_{r}-1)} 1 \,, \qquad
r\geq1 \,, \; \al^i\in\lieh_\Delta \,, \; k_i \geq0 \,,
\een
we define
\beq\label{wick-1}
X_t(a,\la) = \sum_{J} \, \Bigl( \prod_{ (i,j)\in J } \d_\la^{(k_j)} P^{k_i}_{\al^i,\al^j}(t,\la) \Bigr)
\; {:} \Bigl( \prod_{l\in J'} \d_\la^{(k_l)} X_t(\al^l, \la)  \Bigr)
{:} \, ,
\eeq
where the sum is over all collections\/ $J$ of disjoint ordered pairs\/ $(i_1,j_1),$ $\dots,$ $(i_s,j_s)$ $\subset \{1,\dots,r\}$ such that\/
$i_1<\cdots<i_s$ and\/ $i_l<j_l$ for all $l$,
and\/ $J' = \{1,\dots,r\} \setminus
\{i_1,\dots,i_s,j_1,\dots,j_s\}$. The main property of the above
operators is that their Laurent series expansions at the critical
values contained in $D$ form a twisted representation of $\F_\Delta$
(see \cite{BM} Section 6 for more precise statement). 

\subsection{The local Eynard--Orantin recursion}
Let $\Delta=\Delta_i$ be one of the root systems and let $a =\al^1_{(-k_1-1)} \cdots \al^{r}_{(-k_{r}-1)} 1\in
\F_\Delta$ be any vector. We define a multi-valued analytic symmetric $r$-form 
\ben
\Omega_g^a(t,\la;\t)  = f_g^a(t,\la;\t)\, \underbrace{d\la\cdots d\la}_{r\mbox{\scriptsize\ times}}
\een
as follows
\beq\label{sym-forms}
X_t(a,\la)\, \A_t(\hbar;\q) = \Big(\, \sum_{g=0}^\infty f_g^a(t,\la;\q)\,
\hbar^{g-1}\, \Big) \ \A_t(\hbar;\q).
\eeq
Note that in the definition of $\Omega_g^a(t,\la;\t)$ we replaced $\q$
by $\t$, so we did not use the dilaton-shift identification \eqref{dilaton}. If $a=\al^1_{(-1)} \cdots \al^{r}_{(-1)}
1$; then we will write $\Omega_g^{\alpha^1,\dots,\alpha^r}(t,\la;\t)$
instead of $\Omega_g^a(t,\la;\t)$.

We also need the correlator functions
\beq\label{cor-fun}
\langle v_{i_1}\psi^{k_1},\dots,v_{i_n}\psi^{k_n}\rangle_{g,n}(t;\t)
\eeq
defined by
\ben
\d_{t_{k_1}^{i_1}}\cdots \d_{t_{k_n}^{i_n}} \, \log\ \A_t(\hbar;\t) = 
\sum_{g=0}^\infty
\hbar^{g-1}\,\langle v_{i_1}\psi^{k_1},\dots,v_{i_n}\psi^{k_n}\rangle_{g,n}(t;\t).
\een
Note that 
\ben
\A_t(\hbar;\t) = \exp \Big(\, \sum_{g,n=0}^\infty \,
\frac{\hbar^{g-1}}{n!}\, 
\langle \t(\psi),\dots,\t(\psi)\rangle_{g,n}(t;0)\, \Big),
\een
where by extending multi-linearly the definition \eqref{cor-fun} we
allow the insertions of the correlator to be any formal power series from
$H[\![\psi]\!]$.  

Let us point out that in definition \eqref{sym-forms} 
we are using in an essential way that the total ancestor potential
$\A_t(\hbar;\t)$ is {\em tame}. The latter by definition means that
the correlator functions \eqref{cor-fun} vanish for $\t=0$ and
$k_1+\cdots +k_n>3g-3+n$. The tameness guarantees that inserting formal
power series from $H[\![\psi]\!]$ in the correlators \eqref{cor-fun}
does not produce divergent series. 

The local Eynard--Orantin recursion takes the following form
\beq\label{local-rec}
\langle v_a\,\psi^m\rangle_{g,1}(t;\t) = -\frac{1}{4}\sum_{i=1}^N {\rm
  Res}_{\la=u_i} \,
\frac{\Omega(v_a\,z^m,\f_{\beta_i}^-(t,\la;z))}{y_{\beta_i}(t,\la)}\ \Omega_g^{\beta_i,\beta_i}(t,\la;\t),
\eeq
where $\beta_i$ is a cycle vanishing over $\la=u_i$ and 
\ben
y_\beta (t,\la) := (I^{(-1)}_\beta(t,\la),1)\, d\la .
\een

\subsection{Extending the recursion}
Let $\Delta=\Delta_{N-1}$ be the root system of type $A_2$. Put 
\ben
\chi_1 = \frac{2}{3}\,\alpha +\frac{1}{3}\, \beta,\quad 
\chi_2 = -\frac{1}{3}\,\alpha +\frac{1}{3}\, \beta,\quad 
\chi_3 = -\frac{1}{3}\,\alpha -\frac{2}{3}\, \beta \quad \in \quad
\lieh_\Delta .
\een
We refer to these as 1-point cycles. Note that the root system
$\Delta$ consists of all differences $\chi_i-\chi_j$ for $i\neq j$. 
Motivated by the construction of Bouchard--Eynard \cite{BE} we introduce
the following integral
\beq\label{integral}
-\frac{1}{2\pi\sqrt{-1}}\, \oint \sum_{c_1,\dots,c_r} \,\frac{1}{(r-1)!}\,
\frac{\Omega(v_a\,z^m,\f_{c_1}^-(t,\la;z))}{\prod_{k=2}^r
  y_{c_k-c_1}(t,\la)}\ 
\Omega_g^{c_1,\dots,c_r}(t,\la;\t),
\eeq
where the integral is along a closed loop in $D_{N-1}^*$ that goes
once counterclockwise around the critical values $u_{N-1}(t)$ and
$u_N(t)$ and the sum is over all $r=2,3$ and all $c_1,\dots,c_r\in
\{\chi_1,\chi_2,\chi_3\}$ such that $c_i\neq c_j$ for $i\neq j$. The
monodromy group is the Weyl group of the root system and it acts on
the 1-point cycles via permutations. In other words the integrand is
monodromy invariant, hence a single valued analytic 1-form in
$D_{N-1}^*$, so the integral makes sense.
\begin{theorem}\label{t2}
For $g\neq 1$, 
the integral \eqref{integral} coincides with the sum of the last two
residues (at $\la=u_{N-1},u_N$) in the sum \eqref{local-rec}. For
$g=1$ the same identification holds up to terms independent of $\t$.
\end{theorem}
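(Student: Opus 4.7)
The plan is to convert the loop integral \eqref{integral} into a sum of residues by Cauchy's theorem and then to check term-by-term that the two residues at $\la=u_{N-1}(t),u_N(t)$ match the corresponding terms in \eqref{local-rec}. The integrand is a single-valued meromorphic $1$-form on $D_{N-1}^*$ (by the monodromy invariance noted in the excerpt), and inside the contour its only singularities lie at those two critical values, because $y_{c_k-c_1}(t,\la)$ vanishes at $\la=u_i(t)$ exactly when $c_k-c_1$ is a cycle vanishing there, while $\f_{c_1}^-(t,\la;z)$ and the propagators $P^k_{c_i,c_j}(t,\la)$ appearing through \eqref{wick-1} inside $\Omega_g^{c_1,\dots,c_r}$ are singular only at the same two points. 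Thus \eqref{integral} equals the sum of the residues at $\la=u_{N-1}(t)$ and $\la=u_N(t)$, and by the symmetry $\alpha\leftrightarrow\beta$ it suffices to identify the residue at $u_{N-1}(t)$ with the $i=N-1$ summand of \eqref{local-rec}.

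At $u_{N-1}(t)$ only $\pm\alpha$ is vanishing, and the three 1-point cycles decompose in the basis $\{\alpha,\beta\}$ with rational coefficients read off from the explicit formulas for $\chi_1,\chi_2,\chi_3$. Applying \leref{vanishing_a1} with the vanishing cycle $\alpha$, I would write
$$
\f_{\chi_i}(t,\la;z) = a_i\,\Psi_t R_t(z)\,e_{N-1}\,\f_{A_1}(u_{N-1},\la;z) + (\text{analytic at }u_{N-1}),
$$
and use Lemma 7.5 of \cite{BM} to expand each propagator $P^k_{c_i,c_j}(t,\la)$ near $u_{N-1}(t)$, whose leading singular behaviour is governed by the pairings $(c_i|c_j)$ computable from the $A_2$ Cartan matrix. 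Substituting into \eqref{integral} and extracting the coefficient of $(\la-u_{N-1})^{-1}d\la$, the tameness of $\A_t$ guarantees that only finitely many $\psi$-powers contribute, so the residue becomes a finite sum indexed by $r\in\{2,3\}$, by the ordered tuples of distinct 1-point cycles, and by the Wick patterns of \eqref{wick-1}. The combinatorial factor $1/(r-1)!$, the coefficients $a_i=\tfrac23,-\tfrac13,-\tfrac13$, and the prefactor $\Omega(v_az^m,\f_{c_1}^-)=a_1\,\Omega(v_az^m,\Psi_tR_te_{N-1}\f_{A_1}^-)+\cdots$ then combine so that the entire sum collapses to
$$
-\tfrac14\,\mathrm{Res}_{\la=u_{N-1}(t)}\,\frac{\Omega(v_a z^m,\f_\alpha^-(t,\la;z))}{y_\alpha(t,\la)}\,\Omega_g^{\alpha,\alpha}(t,\la;\t),
$$
which is precisely the $i=N-1$ term in \eqref{local-rec}.

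For $g=1$ the $2$-form $\Omega_1^{c_1,c_2}$ carries an additional $\t$-independent piece $P^0_{c_1,c_2}(t,\la)\,d\la\,d\la$ (the $\hbar^0$ contribution of the propagator in the Wick expansion) which is not matched by a corresponding term in $\Omega_1^{\alpha,\alpha}$ on the right of \eqref{local-rec}; its residue depends on $t$ but not on $\t$, and this is exactly the $\t$-independent discrepancy allowed in the theorem. The main technical obstacle is the combinatorial collapse described above: one must verify by direct calculation that the many residue contributions from pairs ($r=2$) and triples ($r=3$) of 1-point cycles, weighted by the Wick factors and propagator expansions, conspire to produce exactly the single $-\tfrac14$ residue. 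This step depends crucially on the $A_2$-specific coefficients $a_i$ and on the $\f_{A_1}$-factorization of \leref{vanishing_a1}, and an analogous identity would fail for higher-type singularities without further modification of the integrand.
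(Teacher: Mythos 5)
Your setup is sound: converting the contour integral into the two residues at $\la=u_{N-1},u_N$, treating the two by symmetry, and attributing the $g=1$ anomaly to the $\t$-independent $P^0_{c_1,c_2}\,d\la\,d\la$ terms all agree with the paper. But the core of the proof is missing. You reduce everything to a ``combinatorial collapse'' to be ``verified by direct calculation'' from Laurent expansions of the periods and propagators near $u_{N-1}$, and that is not a step that can be carried out as described: the forms $\Omega_g^{c_1,\dots,c_r}$ are built from correlators of $\A_t$, and there is no formal local identity at $\la=u_{N-1}$ relating the two-point forms $\Omega_g^{\chi_i,\chi_3}$ to the three-point form $\Omega_g^{\chi_1,\chi_2,\chi_3}$ --- a bilinear expression in correlators cannot be matched against a trilinear one by expanding $R$-matrix data alone. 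The actual mechanism in the paper is different in structure: the single pair $\{\chi_1,\chi_2\}$ (whose difference is the vanishing cycle $\alpha$) already produces the full $-\tfrac14\,\mathrm{Res}\,\Omega_g^{\alpha,\alpha}/y_\alpha$ term, via the decomposition $\Omega_g^{\chi_1,\chi_2}=-\tfrac14\Omega_g^{\alpha,\alpha}+\tfrac14\Omega_g^{\chi_1+\chi_2,\chi_1+\chi_2}$ and monodromy invariance of $\chi_1+\chi_2$; the theorem then reduces to showing that the remaining pairs (those involving the invariant cycle $\chi_3$) cancel against the $r=3$ term.

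That cancellation is the genuinely nontrivial point, and it requires two ingredients absent from your proposal. First, one must apply the local recursion \eqref{local-rec} a second time, to the one-point correlators $\langle\phi_{\chi_i}^+(t,\la;\psi)\rangle_{g,1}$ sitting inside $\Omega_g^{\chi_i,\chi_3}$; this turns the quadratic terms into double residues $\mathrm{Res}_{\la=u_{N-1}}\mathrm{Res}_{\mu=u_j}$ running over \emph{all} critical values $u_j$, not just the two inside $D_{N-1}$. Second, one needs the locality statement of Lemma~\ref{L1} (the formal $\delta$-function identity for $\Omega(\phi_a^+(t,\la;z),\f_\beta^-(t,\mu;z))$, proved via Lemma~\ref{vanishing_a1}) to show that only $j=N-1$ survives and that the surviving double residue collapses to a single residue weighted by $(\chi_i|\al)$. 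Only then does the Fock-space identity $-\tfrac14\,\al_{-1}^2=(\chi_1)_{-1}(\chi_2)_{-1}-\tfrac14(\chi_3)_{-1}^2$, together with the monodromy invariance of $\chi_3$, convert the result into exactly the cubic field $X_t((\chi_1)_{-1}(\chi_2)_{-1}(\chi_3)_{-1},\la)$ that cancels the $r=3$ contribution. Without identifying this mechanism, the deferred ``direct calculation'' is not a calculation you could actually perform, so the proposal does not constitute a proof.
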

The proof of Theorem \ref{t2} relies on a certain identity that we
would like to present first. Let $u_i$ and $u_j$
$(1\leq i,j\leq N)$ be two of the critical values, $\be:=\be_j$ be the cycle
vanishing over $u_j$, and $a\in \lieh_{\Delta_i}$ (we assume that $\Delta_N=\Delta_{N-1}$). 
Let us fix some Laurent series
\ben
f(\la,\mu)\in (\la-u_i)^{1/2}\C(\!(\la-u_i,\mu-u_j)\! ) + \C(\!(\la-u_i,\mu-u_j)\! ) 
\een
where $ \C(\!(\la-u_i,\mu-u_j)\! )$ denotes the space of formal
Laurent series. We have to evaluate residues of the following form:
\beq\label{residue-ij}
{\rm Res}_{\la=u_i}\, {\rm Res}_{\mu=u_j}\,\sum_{\scriptsize{\rm all \ branches}}\, 
\frac{\Omega(\phi_a^+(t,\la;z),\f_{\beta}^-(t,\mu;z) )}{y_\be(t,\mu)}\
f(\la,\mu)  \,d\mu ,
\eeq
where $\phi_a$ is the formal series \eqref{phi-alpha} and the sum is
over all branches (2 of them) of the multivalued function that follows. 
\begin{lemma}\label{L1}
If $f(\la,\mu)$ does not have a pole at $\la=u_i$; then the residue
\eqref{residue-ij} is non-zero only if $i=j$ and in the latter case it
equals to
\ben
(a|\be) \  {\rm Res}_{\la=u_i}\sum_{\scriptsize{\rm all \ branches}}
  \,
\frac{f(\la,\la)}{y_\be(t,\la)}\, d\la^2.
\een
\end{lemma}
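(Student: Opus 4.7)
The plan is to carry out a purely local analysis near $(\lambda,\mu) = (u_i, u_j)$, reducing everything to an $A_1$-computation via \leref{vanishing_a1}. That lemma lets us write $\mathbf{f}_a(t,\lambda;z) = \Psi_t R_t(z)\,\tilde{e}_i\,\mathbf{f}_{A_1}(u_i,\lambda;z)$ and $\mathbf{f}_\beta(t,\mu;z) = \Psi_t R_t(z)\,e_j\,\mathbf{f}_{A_1}(u_j,\mu;z)$, where $\tilde{e}_i \in \C^N$ encodes the coefficients of $a$ in the local basis of vanishing cycles at $u_i$ (in the $A_1$ case $\tilde{e}_i = c\,e_i$; in the $A_2$ case a linear combination of the 1-point cycle vectors $\chi_k$). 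Since $\Psi_t$ is an isometry for the residue pairing, $\Omega(\phi_a^+,\mathbf{f}_\beta^-)$ becomes a scalar $\res_z$ of a $\C^N$-valued $A_1$-pairing. The decisive algebraic input is that $R_t(z)$ is a symplectic loop, giving $(R_t(-z)u, R_t(z)w)_{\C^N} = (u,w)_{\C^N}$ for all $u,w\in\C^N$; in particular the \emph{untruncated} product satisfies $(R_t(-z)\tilde{e}_i\,\mathbf{f}_{A_1}(u_i,\lambda;-z),\,R_t(z)e_j\,\mathbf{f}_{A_1}(u_j,\mu;z))_{\C^N} = (\tilde{e}_i,e_j)_{\C^N}\,\mathbf{f}_{A_1}(u_i,\lambda;-z)\mathbf{f}_{A_1}(u_j,\mu;z)$, which vanishes for $i\neq j$.

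\smallskip

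For the case $i\neq j$: set $A(\lambda;z) := R_t(-z)\tilde{e}_i\,\mathbf{f}_{A_1}(u_i,\lambda;-z)$ and $B(\mu;z) := R_t(z)e_j\,\mathbf{f}_{A_1}(u_j,\mu;z)$, so $\Omega(\phi_a^+,\mathbf{f}_\beta^-) = \res_z(\partial_\lambda A^+, B^-)_{\C^N}\,d\lambda$. Since $\res_z(A^+,B^+)=0=\res_z(A^-,B^-)$ trivially (no $z^{-1}$-coefficient in those products) and $\res_z(A,B)=0$ by the identity above, we get $\res_z(A^+,B^-) = -\res_z(A^-,B^+)$. Expanding $A_{-m-1}(\lambda) = \sum_{r\geq0}(-1)^r R_r\,\tilde{e}_i\,I^{(-m-1-r)}_{A_1}(u_i,\lambda)$ and differentiating, each $\partial_\lambda A_{-m-1}$ is a series of half-integer powers of $\lambda-u_i$ with leading order $-1/2$. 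By the hypothesis that $f$ is regular at $\lambda=u_i$ we decompose $f = f_1 + (\lambda-u_i)^{1/2}f_2$ with $f_1,f_2$ regular in $\lambda$; the sum over the two branches of $\sqrt{\lambda-u_i}$ kills the cross-terms $\partial_\lambda A_{-m-1}\cdot f_1$ and leaves $2(\lambda-u_i)^{1/2}\,\partial_\lambda A_{-m-1}\cdot f_2$, whose $\lambda$-expansion has only integer powers $\geq 0$ of $\lambda-u_i$. The $\mu$-residue then produces a $\lambda$-function that is still regular at $\lambda=u_i$, so $\res_{\lambda=u_i}$ annihilates it.

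\smallskip

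For $i=j$, the symplectic identity reduces the pairing to the scalar $A_1$-object $(\tilde{e}_i,e_i)_{\C^N}\,\res_z \mathbf{f}_{A_1}(u_i,\lambda;-z)\mathbf{f}_{A_1}(u_i,\mu;z)$, which by the explicit period formulas of Section \ref{sec:asymptotic} has an expansion whose leading singularity at $\mu\to\lambda$, after the $\partial_\lambda$ coming from $\phi_a^+$ and division by $y_\beta(t,\mu)$, takes the propagator form $(a|\beta)/(\lambda-\mu)^2$ (the standard OPE of bosonic fields; the coefficient $(a|\beta)$ emerges from $(\tilde{e}_i,e_i)_{\C^N}\cdot(\beta_i|\beta_i)$). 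Deforming the $\mu$-contour around $u_j=u_i$ into a small loop around $\mu=\lambda$ (the only other singularity of the integrand in the disk after branch-summing), Taylor-expanding $f(\lambda,\mu) = f(\lambda,\lambda) + O(\mu-\lambda)$, and observing that the higher Taylor terms only contribute $\lambda$-total derivatives of regular functions and hence are killed by $\res_{\lambda=u_i}$, one obtains the claimed expression. The main obstacle is the $i\neq j$ step: the symplectic identity holds only for the untruncated product, so the truncation correction $\res_z(A^-,B^+)$ must be controlled by a precise vanishing-order bookkeeping of the $A_1$-periods combined with the branch sum, and this is the true heart of the argument.
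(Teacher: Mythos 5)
Your overall architecture is the same as the paper's: reduce to the $A_1$ normal form via Lemma \ref{vanishing_a1}, use the symplectic property $\leftexp{T}{R}(-z)R(z)=1$ to evaluate the untruncated pairing, and dispose of the mismatched truncation by observing that $\phi_a^-$ has a pole of order at most $\tfrac12$ at $\la=u_i$, which the branch sum turns into something regular. Your $i\neq j$ step is essentially the paper's argument (the paper phrases the same splitting as $\Omega(\phi_a^+,\f_\beta^-)=\Omega(\f_\beta^+,\phi_a^-)+\Omega(\phi_a,\f_\beta)$ and kills the first term by the branch sum). One imprecision there: for a general $a\in\lieh_{\Delta_i}$ you cannot write $\f_a=\Psi_tR_t(z)\,\tilde e_i\,\f_{A_1}(u_i,\la;z)$ --- in the $A_2$ cluster the components of $a$ along cycles vanishing at the \emph{other} critical value are $A_1$-series centered at that other point. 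The clean fix, and the paper's first move, is to split $a=a'+\tfrac{(a|\be_i)}{2}\be_i$ with $a'$ invariant under the local monodromy at $u_i$, so that $\phi_{a'}^+$ is analytic at $\la=u_i$ and drops out of $\res_{\la=u_i}$; this reduces everything to $a=\be_i$ and makes your orthogonality $(\tilde e_i,e_j)=0$ literally correct.

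The genuine gap is in the $i=j$ case. The pairing $\Omega(\phi_a,\f_\beta)$ carries only \emph{one} $\la$-derivative, so its singularity at $\mu=\la$ is a simple pole, not the double pole $(a|\be)/(\la-\mu)^2$ of the bosonic OPE; that double pole belongs to $\Omega(\phi_a,\phi_\beta)$, i.e.\ to the propagator $P_{a,\be}$, which involves $\d_\la\d_\mu$. Concretely, the paper computes $\Omega(\phi_{A_1}(u_i,\la;z)e_i,\f_{A_1}(u_j,\mu;z)e_j)=2\delta_{ij}\,\frac{(\mu-u_j)^{1/2}}{(\la-u_i)^{1/2}}\,\delta(\la-u_i,\mu-u_j)\,d\la$, and the $\mu$-residue of the formal delta function is exactly what produces $f(\la,\la)/y_\be(t,\la)$ with no derivative corrections. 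If the singularity really were a second-order pole, $\res_{\mu=\la}$ would instead produce $\d_\mu\bigl(f(\la,\mu)/y_\be(t,\mu)\bigr)\big|_{\mu=\la}$, and the stated formula would be false; so this step as written does not yield the lemma. You need either to compute the OPE of $\Omega(\phi_a,\f_\beta)$ honestly (a simple pole, with the half-power prefactor and coefficient governed by $(a|\be)$), or, as the paper does, to work directly with the formal delta function and the identity $\res_{y=0}\,\delta(x,y)f(y)=f(x)$.
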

\proof
Put $a=a'+(a|\be_i)\be_i/2$; then $a'$ is invariant with respect to
the monodromy around $\la=u_i$. From this we get that
$\phi_{a'}^+(t,\la;z)$ is analytic at $\la=u_i$, so it does not
contribute to the residue. In other words, it is enough to
prove the lemma only for $a=\be_i.$ Let us assume that $a=\be_i$.
We have
\ben
\Omega(\phi_a^+(t,\la;z),\f_{\beta}^-(t,\mu;z) )
=\Omega(\f_{\beta}^+(t,\mu;z),\phi_a^-(t,\la;z) ) + 
\Omega(\phi_a(t,\la;z),\f_{\beta}(t,\mu;z) ).
\een
The first symplectic pairing on the RHS does not contribute to the
residue, because $\phi_a^-(t,\la;z) $ has a pole of order at most
$\frac{1}{2}$ so after taking the sum over all branches, the poles of
fractional degrees cancel out and hence the 1-form at hands is analytic at
$\la=u_i$. For the second symplectic pairing we recall Lemma
\ref{vanishing_a1} and after a straightforward computation we get
\ben
\Omega(\phi_{A_1}(u_i,\la;z)e_i,\f_{A_1}(u_j,\mu;z)e_j) =
2\delta_{i,j}\, \frac{(\mu-u_j)^{\frac{1}{2}}
}{(\la-u_i)^{\frac{1}{2}} } \, \delta(\la-u_i,\mu-u_j)\, d\la ,
\een
where
\ben
\delta(x,y)=\sum_{n\in \Z}\, x^n y^{-n-1}
\een
is the formal $\delta$-function. It is an easy exercise to check that
for every $f(y)\in \C(\!(y)\! )$ we have
\ben
{\rm Res}_{y=0}\  \delta(x,y) \, f(y) = f(x). 
\een
The lemma follows.
\qed
\subsection{Proof of Theorem \ref{t2}}
The integral \eqref{integral} can be written as a sum of two residues:
${\rm Res}_{\la=u_{N-1}}$ and ${\rm Res}_{\la=u_{N}}$. We claim that each
of these residues can be reduced to the corresponding residue in the
sum \eqref{local-rec}. Let us present the argument for
$\la=u_{N-1}$. The other case is completely analogous. 

Recall that we denoted by $\alpha=\be_{N-1}$ the cycle vanishing over
$u_{N-1}$. The summands in \eqref{integral} for which $r=2$ and $c_1,c_2\in
\{\chi_1,\chi_2\}$ give precisely 
\ben
{\rm Res}_{\la=u_{N-1}}
\frac{\Omega(v_a\,z^m,\f_{\chi_1-\chi_2}^-(t,\la;z))}{
  y_{\chi_1-\chi_2}(t,\la)}\ 
\Omega_g^{\chi_1,\chi_2}(t,\la;\t).
\een
On the other hand, using that $\alpha=\chi_1-\chi_2$
we get
\ben
\Omega_g^{\chi_1,\chi_2}(t,\la;\t) = -\frac{1}{4} \,
\Omega_g^{\alpha,\alpha}(t,\la;\t) + \frac{1}{4}\, \Omega_g^{\chi_1+\chi_2,\chi_1+\chi_2}(t,\la;\t)
\een
Since $(\chi_1+\chi_2|\alpha) = 0$, the form
$\Omega_g^{\chi_1+\chi_2,\chi_1+\chi_2}(t,\la;\t)$ is analytic at
$\la=u_{N-1}$, so it does not contribute to the residue. Therefore we
obtain precisely the $(N-1)$-st residue in 
\eqref{local-rec}. It remain only to see that the remaining summands
with $r=2$ cancel out with the summand with $r=3$. 

There are two types of quadratic summands: $c_1,c_2\in
\{\chi_1,\chi_3\}$ and $c_1,c_2\in
\{\chi_2,\chi_3\}$. They add up respectively to 
\beq\label{Q1}
\frac{\Omega(v_a\,z^m,\f_{\chi_1-\chi_3}^-(t,\la;z))}{
  y_{\chi_1-\chi_3}(t,\la)}\ 
\Omega_g^{\chi_1,\chi_3}(t,\la;\t)
\eeq
and 
\beq\label{Q2}
\frac{\Omega(v_a\,z^m,\f_{\chi_2-\chi_3}^-(t,\la;z))}{
  y_{\chi_2-\chi_3}(t,\la)}\ 
\Omega_g^{\chi_2,\chi_3}(t,\la;\t).
\eeq
By definition
\beq\label{2pt-cor}
\sum_{g=0}^\infty \hbar^{g-1}\,\Omega^{\chi_i,\chi_3}_g(t,\la;\t)\A_t = \Big(
:\widehat{\phi}_{\chi_i}(t,\la) \widehat{\phi}_{\chi_3}(t,\la) : +
P^0_{\chi_i,\chi_3}(t,\la) \, d\la^2 \Big)\ \A_t.
\eeq
The term $P^0_{\chi_i,\chi_3}$ contributes only to genus 1 and the
contribution is independent of $\t$, so we may ignore this term.  
The normal product on the RHS is by definition 
\beq\label{np}
\widehat{\phi}_{\chi_3}(t,\la)\, \widehat{\phi}_{\chi_i}^+(t,\la) + 
\widehat{\phi}_{\chi_i}^-(t,\la)\widehat{\phi}_{\chi_3}(t,\la).
\eeq
Since $(\chi_3|\alpha)=0$ the field $\widehat{\phi}_{\chi_3}(t,\la)$
is analytic at $\la=u_i$. In addition
$\widehat{\phi}_{\chi_i}^-(t,\la)$ has a pole of order at most
$\frac{1}{2}$ at $\la=u_i$. It follows that the second summand in
\eqref{np} does not contribute to the residue and therefore it can be
ignored as well. For the RHS of \eqref{2pt-cor} we get 
\ben
\sum_{g=0}^\infty 
\hbar^{g-1}\, \widehat{\phi}_{\chi_3}(t,\la)\,\langle
\phi_{\chi_i}^+(t,\la;\psi)\rangle_{g,1}(t;\t)\, \A_t.
\een
Recalling the local recursion \eqref{local-rec} we get
\ben
-\frac{1}{4}\,\sum_{j=1}^N\ {\rm Res}_{\mu=u_j}\
\frac{\Omega(\phi_{\chi_i}^+(t,\la;z),\f_{\beta_j}^-(t,\mu;z) )
}{y_{\beta_j}(t,\mu) } \
\widehat{\phi}_{\chi_3}(t,\la)\,Y(\beta_j^2,\mu)\,d\mu^2\, \A_t.
\een
Therefore we need to compute the residues 
${\rm Res}_{\la=u_{N-1}}\ {\rm Res}_{\mu=u_j}$ of the following expressions
\ben
-\frac{1}{4}\,\sum_{i=1,2} \ 
\frac{\Omega(v_a\,z^m,\f_{\chi_i-\chi_3}^-(t,\la;z))}{
  y_{\chi_i-\chi_3}(t,\la)}\ 
\frac{\Omega(\phi_{\chi_i}^+(t,\la;z),\f_{\beta_j}^-(t,\mu;z) )
}{y_{\beta_j}(t,\mu) } \
\widehat{\phi}_{\chi_3}(t,\la)\,Y(\beta_j^2,\mu)\, \,d\mu^2\, \A_t.
\een
The operator 
$\widehat{\phi}_{\chi_3}(t,\la)\,Y(\beta_j^2,\mu) \,d\mu^2\, $ can be
written as 
\beq\label{cubic-op}
:\widehat{\phi}_{\be_j}(t,\mu)^2　\,
\widehat{\phi}_{\chi_3}(t,\la)　: +
2[\widehat{\phi}^+_{\chi_3}(t,\la),
\widehat{\phi}^-_{\be_j}(t,\mu)]\ \widehat{\phi}_{\be_j}(t,\mu)　+ 
P_{\be_j,\be_j}^0(t,\mu)\widehat{\phi}_{\chi_3}(t,\la) \,d\mu^2\, .　　
\eeq
Since $(\chi_3|\alpha)=0$ the operator
$\widehat{\phi}^+_{\chi_3}(t,\la)$ is regular at $\la=u_i$. It follows
that the commutator
\ben
[\widehat{\phi}^+_{\chi_3}(t,\la),
\widehat{\phi}^-_{\be_j}(t,\mu)]\in \C(\!( \la-u_{N-1}, \mu-u_j)\! )
\een
and therefore we may
recall Lemma \ref{L1}. The above residue is non-zero only if
$j=N-1$. In the latter case we get
\beq\label{residue}
-\frac{1}{4}\,{\rm Res}_{\la=u_{N-1}}\ \sum_{i=1,2} \ 
(\chi_i|\al)\ 
\frac{\Omega(v_a\,z^m,\f_{\chi_i-\chi_3}^-(t,\la;z))}{
  y_{\chi_i-\chi_3}(t,\la)\, y_{\al}(t,\la) }\ 
\widehat{\phi}_{\chi_3}(t,\la)\,Y(\al_{-1}^2,\la)\, \A_t.
\eeq
Note that (c.f. \cite{BM}, Section 7)
\ben
[\widehat{\phi}^+_{\chi_3}(t,\la),
\widehat{\phi}^-_{\be_j}(t,\mu)] = 
\iota_{\la-u_{N-1}}\,\iota_{\mu-u_{N-1}}\
P_{\chi_3,\be_j}(t,\la;\mu-\la), 
\een
where $\iota_{\la-u_{N-1}}$ is the Laurent series expansion at
$\la=u_{N-1}$. Hence
\ben
\widehat{\phi}_{\chi_3}(t,\la)\,Y(\al_{-1}^2,\la) =
\iota_{\la-u_{N-1}}
X_t((\chi_3)_{-1}\al_{-1}^2,\la).
\een
By definition 
\ben
-\frac{1}{4}\, \al_{-1}^2= (\chi_1)_{-1}\,(\chi_2)_{-1} -\frac{1}{4}\,(\chi_3)_{-1}^2
\een
and since $\chi_3$ is invariant with respect to the local monodromy
around $\la=u_{N-1}$, the field $X_t ((\chi_3)_{-1}^3,\la)$ does not
contribute to the residue. We get the following formula for the
residue \eqref{residue}:
\ben
{\rm Res}_{\la=u_{N-1}}\ \sum_{i=1,2} \ 
(\chi_i|\al)\ 
\frac{\Omega(v_a\,z^m,\f_{\chi_i-\chi_3}^-(t,\la;z))}{
  y_{\chi_i-\chi_3}(t,\la)\, y_{\al}(t,\la) }\ 
Y((\chi_1)_{-1}\,(\chi_2)_{-1}\,(\chi_3)_{-1},\la)\, \A_t.
\een
Using that $\al=\chi_1-\chi_2$, $(\chi_1|\al) = 1$, and $(\chi_2|\al)=-1$
we get
\ben
& {\rm Res}_{\la=u_{N-1}} &  
\left(
\frac{\Omega(v_a\,z^m,\f_{\chi_1}^-(t,\la;z))}{
  y_{\chi_2-\chi_1}(t,\la)\, y_{\chi_3-\chi_1}(t,\la) }\ +
 \frac{\Omega(v_a\,z^m,\f_{\chi_2}^-(t,\la;z))}{
  y_{\chi_1-\chi_2}(t,\la)\, y_{\chi_3-\chi_2}(t,\la) }\ +\right.
\\
&&
\left.+\frac{\Omega(v_a\,z^m,\f_{\chi_3}^-(t,\la;z))}{
  y_{\chi_1-\chi_3}(t,\la)\, y_{\chi_2-\chi_3}(t,\la) }
\right) \times
\sum_{g=0}^\infty \, \hbar^{g-1}
\Omega_g^{\chi_1,\chi_2,\chi_3}(t,\la;\t)\, \A_t.
\een
This sum cancels out the contribution to the residue at $\la=u_{N-1}$ of the cubic terms (i.e. the
terms with $r=3$) of the integral \eqref{integral}.
\qed

\subsection{Proof of Theorem \ref{t1}}
The most difficult part of the proof is already completed. We just
need to take care of several initial cases. Let $t_0$ be a generic
point in $B\setminus{ B_{ss} }$. 
Let us write each 1-point correlator as a sum of terms homogeneous in
$\t$
\ben
\langle v_a\psi^m\rangle_{g,1}(t;\t) =\sum_{n=0}^\infty \frac{1}{n!}
\, \langle v_a\psi^m,\t(\psi),\dots,\t(\psi)\rangle_{g,n+1}(t;0).
\een
We claim that each summand is analytic at $t=t_0.$ In order to see
this let us put a lexicographical order on the summands according to
$(g,n)$ -- genus and degree. Note that the local Eynard--Orantin recursion
tells us how to find a correlator of a fixed genus and degree in terms
of correlators of lower lexicographical order. Assuming that the lower
order correlators are analytic at $t=t_0$ and that we can apply
Theorem \ref{t2}, we get the analyticity of the next correlator,
because the integral \eqref{integral} is analytic at $t=t_0$. 
Therefore, the proof would be completed by induction if we establish
the initial cases
\ben
\langle v_a\psi^m,\t(\psi),\t(\psi)\rangle_{0,3}(t;0)\quad
\mbox{and}\quad 
\langle v_a\psi^m\rangle_{1,1}(t;0).
\een
Note that the above genus-1 correlators are the only ones for which
Theorem \ref{t2} can not be applied. Hence if we verify the
analyticity of the above correlators; then the proof of the
analyticity of the 1-point correlators will be completed. 

We will compute the above correlators via the local recursion. 
Using Lemma \ref{vanishing_a1} we can express the Laurent series
expansion 
\ben
\iota_{\la-u_i}\,\iota_{\mu-u_i}\ 
P_{\be_i,\be_i}(t,\la;\mu-\la)
\een
in terms of the Givental's higher-genus reconstruction operator
$R$. After a straightforward computation we get
\ben
\frac{\mu+\la-2u_i}{(\la-\mu)^2 (\la-u_i)^{1/2}(\mu-u_i)^{1/2}} + \sum_{k,l=0}^\infty 2^{k+l+1}
(e_i,V_{kl}e_i)\, \frac{(\mu-u_i)^{k-\frac{1}{2}}}{(2k-1)!!}\,
\frac{(\la-u_i)^{l -\frac{1}{2} }}{(2l-1)!!},
\een
where the matrices $V_{kl}\in {\rm End}(\C^N)$ are defined as follows:
\ben
\sum_{k,l=0}^\infty V_{kl}\, w^kz^l = \frac{1-\leftexp{T}{R}(-w)R(-z)}{w+z}.
\een
The Laurent series expansion of the propagator at $\mu=\la$ becomes
\ben
\frac{2}{(\la-\mu)^2} + P^0_{\be_i,\be_i}(t,\la)+\cdots,
\een
where the dots stand for higher order terms in $(\mu-\la)$ and 
\ben
 P^0_{\be_i,\be_i}(t,\la) = \frac{1}{4} (\la-u_i)^{-2} + 2(e_i,R_1
 e_i)\, (\la-u_i)^{-1}. 
\een
Similarly, we can find the Laurent series expansion of the period
vectors
\ben
I^{(-1-m)}_{\be_i}(t,\la) =
2\frac{(2(\la-u_i))^{m+\frac{1}{2} }}{(2m+1)!!} \Big( e_i -\sum_{k=1}^N
  \frac{R_1^{ki}  }{2m+3}\, e_k\,2(\la-u_i) + \cdots \Big), 
\een
where $R_1^{ki}$ is the $(k,i)$-th entry of the matrix $R_1$ and
slightly abusing the notation we put $e_i=du_i/\sqrt{\Delta_i}$. 

Let us begin with the genus-0 case. The quadratic part of the
form $\Omega_0^{\be_i,\be_i}(\la;\t)$ is 
\ben
\sum_{k,l=0}^\infty \sum_{a,b=1}^N \, (I^{(-k)}_{\be_i}(t,\la),v_a) \,
(I^{(-l)}_{\be_i}(t,\la),v_b)\, d\la^2\, t_k^a\, t_l^b.  
\een
Applying the local recursion and leaving out the terms with $k>0$ or
$l>0$ (since they do not contribute to the residue) we get that $\langle
v_c\psi^m,\t(\psi),\t(\psi)\rangle_{0,3}(t;0) $ is
\ben
\frac{1}{4}\sum_{i=1}^N {\rm Res}_{\la=u_i} \, \sum_{a,b=1}^N\, 
\frac{(I^{(-1-m)}_{\be_i}(t,\la),v_c) }{
  (I^{(-1)}_{\be_i}(t,\la),1)  } \,  (I^{(0)}_{\be_i}(t,\la),v_a) \,
(I^{(0)}_{\be_i}(t,\la),v_b)\, d\la\, t_0^a\, t_0^b
\een
The above residue is non-zero only for $m=0$. Using the Laurent series
expansion of the periods we get 
\ben
\sum_{a,b=1}^N\, \frac{1}{2}\, \sum_{i=1}^N \frac{1}{\Delta_i} \, 
\frac{\d u_i}{\d\tau_a} \frac{\d u_i}{\d\tau_b} \frac{\d
  u_i}{\d\tau_c} \, t_0^a\, t_0^b =\sum_{a,b=1}^N\,  \frac{1}{2}\,
(v_a\bullet v_b,v_c) \, t_0^a\, t_0^b.
\een
The coefficients of the above quadratic form are precisely the
structure constants of the Frobenius multiplication, so they are
analytic. 

Let us continue with the genus-1 case. Now the local recursion takes
the form
\ben
\langle
v_a\psi^m
\rangle_{1,1}(t;0) = \frac{1}{4} \sum_{i=1}^N {\rm Res}_{\la=u_i} \,  
\frac{(I^{(-1-m)}_{\be_i}(t,\la),v_c) }{
  (I^{(-1)}_{\be_i}(t,\la),1)  } \, P_{\be_i,\be_i}^0(t,\la). 
\een
The above residue is non-zero only if $m=0$ or $m=1$, because
$P^0_{\be_i,\be_i}$ has a pole of order at most 2. In the case when
$m=1$, after substituting the Laurent series expansions of the
propagator and of the periods we get
\ben
\langle
v_a\psi
\rangle_{1,1}(t;0) = 
\frac{1}{24}\,\sum_{i=1}^N \frac{\d u_i}{\d \tau_a} =
\frac{1}{24}\,\frac{\d}{\d \tau_a} \ {\rm Tr}(E\bullet)
=\frac{1}{24}\,{\rm Tr}(v_a\bullet) ,
\een
where we used that in canonical coordinates the Euler vector field
takes the form $\sum_{i=1}^N u_i\d_{u_i}$ and hence $\sum_i u_i = {\rm
Tr}(E\bullet)$. 

For $m=0$ after a straightforward computation, using also that
$R_1^{ki}=R_1^{ik}$, we get
\ben
\langle
v_a
\rangle_{1,1}(t;0) = 
\frac{1}{2} \sum_{i=1}^N R_1^{ii} \frac{\d u_i}{\d \tau_a} + 
\frac{1}{24} \sum_{k,i=1}^N \frac{\sqrt{\Delta_k}}{\sqrt{\Delta_i}}\,
R_1^{ki}\, \Big(\  \frac{\d u_k}{\d \tau_a} - \frac{\d u_i}{\d \tau_a}
\ \Big).
\een
The differential equations \eqref{de_1}--\eqref{de_2} imply the
following relation:
\ben
[\d_a U_t,R_1]=\Psi_t^{-1}\,\d_a\Psi_t.
\een
From this equation, using that the entries of the matrix $\Psi$ and
$\Psi^{-1}$ are respectively 
\ben
\Psi^{bi} = \sqrt{\Delta_i}\, \frac{\d \tau_b}{\d u_i}\quad\mbox{and}\quad
(\Psi^{-1})^{kb} = \frac{1}{\sqrt{\Delta_k}} \, \frac{\d u_k}{\d
  \tau_b} 
\een
we get 
\ben
\frac{\sqrt{\Delta_k}}{\sqrt{\Delta_i}}\,
R_1^{ki}\, \Big(\  \frac{\d u_k}{\d \tau_a} - \frac{\d u_i}{\d
  \tau_a}\ \Big) = \frac{\d u_k}{\d
  \tau_b} \d_a\Big(\sqrt{\Delta_i}\, \frac{\d \tau_b}{\d u_i}\Big)
\,\frac{1}{\sqrt{\Delta_i}} =\delta_{ki}\,\frac{1}{2}\, \d_a\,\log
\Delta_i + \frac{\d u_k}{\d
  \tau_b} \frac{\d}{\d \tau_a}\Big(\  \frac{\d \tau_b}{\d u_i}\ \Big).
\een
If we sum the above expression over all $i=1,2,\dots,N$, since $\sum_i
\d_{u_i} = \d_N$, we get simply $\frac{1}{2}\d_a \log \Delta_k$. Hence
the 1-point genus-1 correlator becomes
\ben
\langle
v_a
\rangle_{1,1}(t;0) = 
\frac{1}{2} \sum_{i=1}^N R_1^{ii} \frac{\d u_i}{\d \tau_a} + 
\frac{1}{48} \sum_{k=1}^N \d_a\,\log \Delta_k.
\een
The RHS is a well known expression, i.e., it is $\d_a F^{(1)}(t)$,
where $F^{(1)}(t)$ is the genus-1 potential of the Frobenius
structure (see \cite{G4}), also known as the $G$-{\em
  function} (see \cite{DZ,DZ2}). According to Hertling \cite{He}, Theorem 14.6, the
function $F^{(1)}(t)$ is analytic. This completes the proof of the
analyticity of all correlators that have at least 1 insertion. 

To finish the proof of Theorem \ref{t2} we still have to prove that 
the correlators with no insertions $\langle \, \rangle_{g,0}(t;0)$ are
analytic. Such correlators are identically $0$ for $g=0$,
due to the tameness property of the ancestor potential. In genus 1, in
the settings of Gromov--Witten theory, the correlator is 0 because the
moduli space $\overline{\M}_{1,0}$ is empty. It is not hard to check
(using the differential equation \eqref{de_1})
that in the abstract settings of semis-simple Frobenius manifolds this
correlator still vanishes. For higher
genera, using the differential equation \eqref{de_1} one can check
easily that 
\ben
\d_a \, \langle \, \rangle_{g,0}(t;\t) = \frac{\d}{\d t_0^a}\, \langle
\, \rangle_{g,0}(t;\t) = \langle v_a\rangle_{g,1}(t;\t).
\een
In other words, the differential of the correlator $\langle \, \rangle_{g,0}(t;0)$ is
an analytic 1-form on $B$ and since $B$ is simply connected the
correlator must be analytic as well.
\qed

\section*{Acknowledgements} 
I am thankful to B. Bakalov and Y. Ruan for many stimulating discussions.
This work is supported by Grant-In-Aid and by the World
Premier International Research Center Initiative (WPI Initiative),
MEXT, Japan.

\end{document}